\newtheorem{thm}{Theorem}[section]
\newtheorem{lem}[thm]{Lemma}
\newtheorem{prop}[thm]{Proposition}
\newtheorem{cor}[thm]{Corollary}
\newtheorem{theoremx}{Theorem}
\theoremstyle{definition}
\newtheorem{definition}[thm]{Definition}
\newtheorem{rem}[thm]{Remark}
\newtheorem{ex}[thm]{Example}
\newcommand{\p}{\mathfrak p}
\newcommand{\kk}{\Bbbk}
\def\opn#1#2{\def#1{\operatorname{#2}}} 
\opn\tr{tr}
\opn\GL{GL}
\opn\SL{SL}
\opn\spec{Spec}
\opn\depth{depth}
\opn\height{ht}
\opn\res{res}
\opn\ord{ord}
\opn{\chara}{char}
\title{Nearly Gorenstein cyclic quotient singularities}
\author{Alessio Caminata}
\address{Alessio Caminata, Institut de Math\'{e}matiques, Universit\'{e} de Neuch\^{a}tel, Rue Emile-Argand 11, CH-2000 Neuch\^{a}tel, Switzerland}
\email{alessio.caminata@unine.ch}
\author{Francesco Strazzanti}
\address{Francesco Strazzanti, Dipartimento di Matematica, Universit\`a di Bologna, Piazza di Porta San Donato 5, 40126 Bologna, Italy}
\email{francesco.strazzanti@unibo.it}
\thanks{2020 \textit{Mathematics Subject Classification}. 13A50, 13H10, 14L30}
\thanks{The second author was partially supported by INdAM, more precisely he was ``titolare di una borsa per l'estero dell'Istituto Nazionale di Alta Matematica'' and ``titolare di un Assegno di Ricerca dell'Istituto Nazionale di Alta Matematica''.}
\keywords{nearly Gorenstein, invariant ring, quotient singularity, trace ideal}
\begin{document}

\begin{abstract}
We investigate the nearly Gorenstein property among $d$-dimensional cyclic quotient singularities $\kk\llbracket x_1,\dots,x_d\rrbracket^G$, where $\kk$ is an algebraically closed field and $G\subseteq\GL(d,\kk)$ is a finite small cyclic group whose order is invertible in $\kk$. We prove a necessary and sufficient condition to be nearly Gorenstein that also allows us to find several new classes of such rings.
\end{abstract}

\maketitle

\section*{Introduction}

Gorenstein rings are among the most important objects in commutative algebra and appear in several contexts.
On the other hand, despite their celebrated \textit{ubiquity} \cite{Bas63, Hun99}, the class of Gorenstein rings is not so large, compared for instance with that of Cohen-Macaulay rings.
In many significant cases one encounters Cohen-Macaulay rings which are not Gorenstein rings. 
For this reason, many researchers started looking for generalizations of the notion of Gorenstein aiming to find a class of Cohen-Macaulay rings which is still able to capture some of the interesting properties of Gorenstein rings.
In recent years, two of these new classes of rings have drawn particular attention: \textit{almost Gorenstein} and \textit{nearly Gorenstein rings}.
Almost Gorenstein rings were first defined by Barucci and Fr\"oberg \cite{BF97} for one-dimensional analytically unramified rings and later generalized by Goto~et~al. \cite{GMP13, GTT15}.
This notion has already been largely investigated and many properties are known, see for example \cite{EGI19, GTT16, HJS19,T18} and the references therein.
On the other hand, nearly Gorenstein rings, which are the object of interest of this paper, have been introduced even more recently by Herzog, Hibi, and Stamate \cite{HHS19} in 2019, although their defining property was already examined by Ding \cite{Din93}, Huneke and Vraciu \cite{HV06}, and Striuli and Vraciu \cite{SV11}. Moreover, nearly Gorenstein rings have been studied in several contexts, such as zero-dimensional schemes \cite{KLL19}, affine semigroup rings \cite{HJS19}, and affine monomial curves \cite{MS20}. See also \cite{EGI19,DKT20,Kob20, Kum20, Rah20} for other related results.

\par To explain the definition and the motivation of nearly Gorenstein rings we start with a Cohen-Macaulay local ring $(R,\mathfrak{m})$ which admits a canonical module $\omega_R$.
The trace of the canonical module, denoted by $\tr(\omega_R)$, is defined as the sum of the ideals $\varphi(\omega_R)$, where the sum is taken over all the $R$-module homomorphisms $\varphi:\omega_R\rightarrow R$.
The importance of $\tr(\omega_R)$ comes from the fact that it describes the non-Gorenstein locus of $R$, since the localization $R_\p$ at a prime ideal $\p$ is not Gorenstein if and only if $\tr(\omega_R) \subseteq \p$.
In particular, it follows that $R$ is Gorenstein if and only if $\tr(\omega_R)=R$. 
For this reason, one defines $R$ to be nearly Gorenstein when $\mathfrak{m} \subseteq \tr(\omega_R)$. It is now clear that a nearly Gorenstein ring is Gorenstein on the punctured spectrum, but the converse does not occur in general. Moreover, it also holds that a one-dimensional almost Gorenstein ring is nearly Gorenstein, even though this is no longer true in the higher dimensional case, where the relation between these two notions remains unclear.

\par In this paper, we look at the nearly Gorenstein property for quotient singularities.
Let $R=\kk\llbracket x_1,\dots,x_d\rrbracket$ be a $d$-dimensional formal power series ring over an algebraically closed field $\kk$ and let $G$ be a finite subgroup of $\GL(d,\kk)$ acting linearly on $R$.
The corresponding invariant ring $R^G$ is the completion at the origin of the coordinate ring of the quotient variety $\mathbb{A}_{\kk}^d/G$, so we will refer to it as a \emph{quotient singularity}.
The study of these objects and their properties lies at the intersection of several branches of mathematics and has been largely explored both from a geometric and an algebraic point of view.
In the modular case, i.e., when the characteristic of $\kk$ divides the order of the group $G$, even the Cohen-Macaulay property is not fully understood (see e.g. \cite[Example~8.0.9]{CW11} or \cite{Kem99}), so we will rather focus on the non-modular situation, that is when $\chara\kk\nmid|G|$.
Under this assumption it is well known that the invariant ring $R^G$ is a complete local normal domain and it is Cohen-Macaulay thanks to Hochster-Eagon's Theorem \cite{HE71}.
Moreover, thanks to an old result of Prill \cite{Pri67} it is not restrictive to assume further that the acting group is small, i.e., it does not contain pseudo-reflections. 
In this case, by a result of Watanabe \cite{Wat74a, Wat74b} the Gorenstein property of these rings is also well understood. Namely, $R^G$ is Gorenstein if and only if the group $G$ is contained in $\SL(d,\kk)$.
Therefore, it arises as a natural problem to look for a characterization of the nearly Gorenstein property for these rings. In fact, we investigate precisely this question for an important class of quotient singularities: cyclic quotient singularities, i.e., when the group $G$ is cyclic.

\par For this class, we are able to find a numerical criterion which gives a necessary and sufficient condition for the ring $R^G$ to be nearly Gorenstein.
Using this criterion, we identify several families of nearly Gorenstein rings.
We recall that if $G$ is a cyclic small subgroup of $\GL(d,\kk)$ of order $n$ with $\chara\kk\nmid n$, we can assume that it is generated by a diagonal matrix $\phi=\mathrm{diag}(\lambda^{t_1},\dots,\lambda^{t_d})$, where $\lambda$ is a primitive $n$-th root of unity in $\kk$ and $t_1,\dots,t_d$ are positive integers such that $\gcd(t_{i_1}, \dots, t_{i_{d-1}},n)=1$ for every $(d-1)$-tuple with distinct integers $i_1, \dots, i_{d-1} \in \{1, \dots, d\}$.
We denote the corresponding invariant ring $R^G$ by $\frac{1}{n}(t_1,\dots,t_d)$.

\begin{theoremx}[see Proposition~\ref{prop:cyclicNGfamily} and Corollaries~\ref{cor:cyclicarenearlyG}, \ref{cor:nleq3isNG}, \ref{cor:VeroneseareNG}]\label{thm:introductioncyclic}
Let $n,d\geq2$ and $t_1,\dots,t_d\geq1$ be integers and assume that at least one of the following holds:
\begin{itemize}
	\item $d=2$;
	\item $n\leq 3$;
	\item $t_1\equiv\cdots\equiv t_d\equiv1\mod n$;
	\item $t_1\equiv\cdots\equiv t_{d-1}\equiv1\mod n$ and $t_d\equiv -d+2\mod n$.
\end{itemize}
Then, the cyclic quotient singularity $\frac{1}{n}(t_1,\dots,t_d)$ is nearly Gorenstein.
\end{theoremx}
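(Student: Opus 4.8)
The plan is to reduce the nearly Gorenstein property to an explicit combinatorial condition on the exponents $t_1,\dots,t_d$ and then to verify that condition in each of the four listed cases. Write $R=\kk\llbracket x_1,\dots,x_d\rrbracket$, so that every monomial $x^a=x_1^{a_1}\cdots x_d^{a_d}$ is a semi-invariant on which the generator $\phi$ acts by the scalar $\lambda^{w(a)}$, where $w(a)=\sum_{i=1}^d t_ia_i \bmod n$. This yields a decomposition $R=\bigoplus_{j\in\Z/n\Z}R_j$ into the weight spaces $R_j=\{f:\phi\cdot f=\lambda^j f\}$, with $S:=R^G=R_0$. Each $R_j$ is a reflexive $S$-module, and by Watanabe's description of the canonical module of a non-modular invariant ring one has $\omega_S\cong R_k$ as $S$-modules, where $k\equiv t_1+\cdots+t_d\pmod n$ (the sign being immaterial for what follows). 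Since $S$ is a normal domain and the weight modules are reflexive, $\operatorname{Hom}_S(R_k,S)=R_{-k}$, realized by multiplication, whence
\[
\tr(\omega_S)=R_k\cdot R_{-k},
\]
the ideal of $S$ generated by all products $fg$ with $f\in R_k$ and $g\in R_{-k}$. Consequently $S$ is nearly Gorenstein if and only if $\m_S\subseteq R_kR_{-k}$, where $\m_S$ is the maximal ideal of $S$.

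Next I would turn this into a statement about exponent vectors. A weight-$0$ monomial $x^c$ lies in the monomial ideal $R_kR_{-k}$ precisely when there exist $a,b\in\N^d$ with $a+b\le c$ componentwise and $w(a)\equiv k$, $w(b)\equiv -k\pmod n$; indeed the complementary factor $x^{c-a-b}$ then automatically has weight $0$ and lies in $S$. Because $\tr(\omega_S)$ and $\m_S$ are both monomial ideals, it suffices to test the finitely many minimal generators of $\m_S$, i.e. the irreducible elements of the affine monoid $\{a\in\N^d:w(a)\equiv 0\}$. This is the numerical criterion playing the role of Proposition~\ref{prop:cyclicNGfamily}, and each of the four cases becomes a matter of exhibiting the required decompositions.

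For the Veronese case $t_1\equiv\cdots\equiv t_d\equiv 1$ the weight is total degree modulo $n$, the ideal $\m_S$ is generated by the degree-$n$ monomials, and $k\equiv d$; any $x^c$ with $|c|=n$ splits as $x^ax^b$ with $|a|=d$ and $|b|=n-d$, so it lies in $R_kR_{-k}$. For the fourth family $t_1\equiv\cdots\equiv t_{d-1}\equiv 1$, $t_d\equiv -d+2$, one computes $k\equiv 1$, so $\tr(\omega_S)=R_1R_{-1}$; here I would use that $x_1,\dots,x_{d-1}\in R_1$ together with explicit low-degree weight-$(-1)$ semi-invariants (such as powers $x_1^{\,n-1}$) to peel a weight-$1$ and a weight-$(-1)$ factor off every minimal generator. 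The case $n\le 3$ reduces, after replacing $\phi$ by $\phi^{-1}$ if necessary, to weights lying in $\{1,2\}$ modulo $3$ (the pure cases being Veronese), and the simultaneous presence of weight-$1$ and weight-$2$ variables makes the splittings routine. Finally, for $d=2$ smallness forces $\gcd(t_i,n)=1$, so after normalization $S=\frac1n(1,q)$ is a toric surface singularity, and I would invoke the explicit description of the defining monoid (the Hirzebruch--Jung/continued-fraction data of $n/q$) to produce the decompositions.

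The main obstacle is twofold. The structural input---identifying $\omega_S\cong R_k$ and deducing $\tr(\omega_S)=R_kR_{-k}$---is where care is needed, since one must justify $\operatorname{Hom}_S(R_k,S)=R_{-k}$ through reflexivity of the weight modules over the normal domain $S$. Granting this, the genuinely delicate case is $d=2$: unlike the Veronese, fourth, and $n\le 3$ families, where a small weight-$k$ (respectively weight-$(-k)$) semi-invariant is visible by inspection, the surface case requires controlling the minimal generators of the monoid $\{(a_1,a_2):a_1+qa_2\equiv 0\}$ and checking that each is divisible by a product of the two extremal semi-invariants. I expect this to be the crux, hinging on the continued-fraction combinatorics of $n/q$.
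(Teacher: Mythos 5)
Your structural reduction is, in substance, the paper's own: the identity $\tr(\omega_{R^G})=R_kR_{-k}$ is exactly Proposition~\ref{prop:cyclictrace} (there phrased via the canonical element $f=x_1\cdots x_d$, so that $(fR)^G=fR_{-k}$ and $(R:_{Q(R)}fR)^G=f^{-1}R_k$), and your componentwise criterion for a weight-zero monomial to lie in $R_kR_{-k}$ is equivalent to Lemma~\ref{lemma-tracecriterion} and Theorem~\ref{cyclic nearly Gorenstein} (your weight-$k$ factor $a$ is automatic once $b$ is found: take $a=c-b$). But there is a real gap in the structural step: $\operatorname{Hom}_S(R_k,S)=R_{-k}$ does \emph{not} follow from reflexivity of the weight modules over the normal domain $S$ alone. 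For $d=1$ and $G=\langle\lambda\rangle$ every $R_j$ is reflexive (indeed free), yet $R_kR_{-k}=\mathfrak{m}_S\subsetneq S=\tr(\omega_S)$. What is actually needed is smallness of $G$, i.e.\ that $S\subseteq R$ is unramified in codimension one; the paper proves the corresponding equality $R^G:_{Q(R^G)}(fR\cap R^G)=(R:_{Q(R)}fR)^G$ by a direct computation that uses $\gcd(t_{i_1},\dots,t_{i_{d-1}},n)=1$ in an essential way. You flag this as ``where care is needed,'' but the argument you indicate would not close it.

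Of the four cases, only the Veronese one is essentially carried out (and there you should take $|a|\equiv d\bmod n$ rather than $|a|=d$, to cover $d>n$; cf.\ the case split on $kn<d\le(k+1)n$ in Corollary~\ref{cor:VeroneseareNG}). Most tellingly, you single out $d=2$ as the crux and propose Hirzebruch--Jung continued fractions, whereas within your own framework this case is trivial: any invariant monomial with full support lies in $R_kR_{-k}$ (take $a=(1,\dots,1)$ and $b=c-(1,\dots,1)$), so for $d=2$ the only generators of $\mathfrak{m}_S$ left to check are $x_1^n$ and $x_2^n$, and since $\gcd(t_i,n)=1$ one solves $b_1t_i\equiv-t_j\bmod n$ with $0<b_1\le n$; this is the two-line proof of Corollary~\ref{cor:cyclicarenearlyG}. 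For the fourth family your plan (``peel off'' a weight-$1$ and a weight-$(-1)$ factor, e.g.\ $x_1^{n-1}$) fails on generators with small exponents such as $x_1x_3\in\frac15(1,1,4)$; the paper's Proposition~\ref{prop:cyclicNGfamily} instead makes the explicit choice $b_i=a_i+1$, $b_{i-1}=a_{i-1}$, $b_j=a_j+1$ for $j\le i-2$. You also never use the hypothesis $\gcd(-d+2,n)=1$ appearing in Proposition~\ref{prop:cyclicNGfamily}, which is genuinely necessary: by Remark~\ref{rem:previousremark} the family is \emph{not} nearly Gorenstein when $\gcd(d-2,n)>1$ (e.g.\ $\frac14(1,1,1,2)$ in Table~\ref{Table NG}), so any argument that ignores it must be incomplete.
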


In the case $t_1\equiv\cdots\equiv t_d\equiv1\mod n$ the corresponding invariant ring is a Veronese subalgebra of $R$. We also notice that, when $R$ is a Gorenstein positively graded $\kk$-algebra with positive dimension, Veronese subalgebras of $R$ are known to be nearly Gorenstein by \cite[Corollary 4.7]{HHS19}. 
Moreover, Theorem~\ref{thm:introductioncyclic} says that if the dimension is two or if the order of the group is at most $3$, then cyclic quotient singularities are always nearly Gorenstein. However, as soon as these assumptions are dropped we may find examples of cyclic quotient singularities that are not nearly Gorenstein. For instance, the invariant ring $\frac{1}{4}(1,2,3)$ is not nearly Gorenstein (see Example~\ref{ex:cyclicnotNG}).
More generally, the numerical criterion we proved can be implemented to find all nearly Gorenstein cyclic quotient singularities for some values of $n$ and $d$. For example, see Table \ref{Table NG} for an exhaustive list of non-isomorphic nearly Gorenstein cyclic quotient singularities with small values of $n$ and $d$.

\par In order to measure the distance of a quotient singularity $R^G$ to be Gorenstein or nearly Gorenstein, one can consider its \emph{residue} which is the length $\res(R^G)=\ell_{R^G} (R^G/\tr(\omega_{R^G}))$. We have that $\res(R^G)=0$ if and only if $R^G$ is Gorenstein, and $\res(R^G)=1$ precisely when $R^G$ is nearly Gorenstein, but not Gorenstein.
Thus, from Theorem~\ref{thm:introductioncyclic} follows that every two-dimensional cyclic quotient singularity $R^G$ has $\res(R^G)\leq1$.
However, already in dimension 3 we are able to produce cyclic quotient singularities of arbitrarily large residue.

\begin{theoremx}[see Theorem~\ref{theorem:residuelarge}]\label{theorem:residueintroduction}
	Let $n$ and $m$ be two coprime positive integers with $n\geq3$ and $m<\lceil\frac{n}{2} \rceil$. Then the cyclic quotient singularity $\frac{1}{n}(1,m,n-1)$ has residue $m$.
\end{theoremx}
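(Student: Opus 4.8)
The plan is to reduce the statement to a finite monomial count governed by an eigenspace decomposition. Write $S=R^G$ with $R=\kk\llbracket x_1,x_2,x_3\rrbracket$, and decompose $R=\bigoplus_{j\in\Z/n\Z}R_j$ into eigenspaces for the generator $\phi=\mathrm{diag}(\lambda,\lambda^m,\lambda^{n-1})$, so that $R_j$ is spanned by the monomials $x_1^ax_2^bx_3^c$ whose weight $\langle(a,b,c)\rangle:=a+mb-c\bmod n$ equals $j$; here $R_0=S$ and each $R_j$ is a rank-one reflexive $S$-module representing the class $j$ in $\operatorname{Cl}(S)\cong\Z/n\Z$. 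Since $t_1+t_2+t_3=1+m+(n-1)\equiv m\pmod n$, Watanabe's description of the canonical module of an invariant ring gives $\omega_S\cong R_{n-m}$, and because $S$ is a normal domain we have $\omega_S^{-1}=\operatorname{Hom}_S(\omega_S,S)\cong R_m$ and $\tr(\omega_S)=\omega_S\cdot\omega_S^{-1}=R_{n-m}R_m$, an $\m$-primary ideal (the singularity is isolated, as $\gcd(t_i,n)=1$ for each $i$). Thus $\res(S)=\ell_S\big(S/R_{n-m}R_m\big)$ is computed by counting a monomial $\kk$-basis of the quotient.

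Next I would translate the trace into a combinatorial avoidance condition. Because $R_{n-m}R_m$ is a monomial ideal, a monomial $x^w$ with $\langle w\rangle\equiv 0$ lies in $\tr(\omega_S)$ if and only if there is a sub-vector $0\le v\le w$ (componentwise) with $\langle v\rangle\equiv m$: indeed, given such $v$ one has $\langle w-v\rangle\equiv -m\equiv n-m$, so $x^w=x^{v}\cdot x^{w-v}\in R_mR_{n-m}$, and conversely every monomial of the product ideal contains such a $v$. Hence $S/\tr(\omega_S)$ has $\kk$-basis the monomials $x^w$ with $\langle w\rangle\equiv 0$ and no sub-vector of weight $\equiv m$. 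The key step is to show that this avoidance is equivalent to the three single-variable inequalities $w_1\le m-1$, $w_2=0$, $w_3\le n-m-1$: necessity is immediate from the sub-vectors $(m,0,0)$, $(0,1,0)$, $(0,0,n-m)$, each of weight $m$; for sufficiency one checks that for $v$ inside the box $[0,m-1]\times\{0\}\times[0,n-m-1]$ the weight $\langle v\rangle=v_1-v_3$ ranges over the interval $[m-n+1,\,m-1]$, which contains no integer congruent to $m$ modulo $n$, since the nearest such integers $m$ and $m-n$ lie just outside it.

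It then remains to intersect this box with the defining condition $\langle w\rangle=w_1-w_3\equiv 0\pmod n$. As $w_1-w_3\in[m-n+1,\,m-1]\subseteq(-n,n)$, the only available multiple of $n$ is $0$, forcing $w_1=w_3$; the surviving basis monomials are exactly the $x_1^jx_3^j$ with $0\le j\le\min(m-1,\,n-m-1)$. Finally, the hypothesis $m<\lceil n/2\rceil$ together with $\gcd(m,n)=1$ and $n\ge 3$ yields $2m<n$, whence $\min(m-1,n-m-1)=m-1$, giving precisely $m$ such monomials and $\res(S)=m$. The only delicate point is the sufficiency in the second paragraph — ruling out that some sub-vector inside the box sneaks up to weight $m$ — but this is exactly where the bound on $m$ enters, and it reduces to the elementary interval estimate above; the remaining counting is then routine.
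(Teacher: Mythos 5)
Your overall strategy coincides with the paper's: identify $\tr(\omega_S)$ with the monomial ideal $R_{n-m}R_m$ and count the monomials of $S=R^G$ lying outside it. Your organization of the count is in fact tidier than the paper's case-by-case check: the necessity of the box conditions $w_1\le m-1$, $w_2=0$, $w_3\le n-m-1$ via the three sub-vectors of weight $m$, the sufficiency via the interval estimate $v_1-v_3\in[m-n+1,\,m-1]$, and the final intersection with $w_1-w_3\equiv 0\pmod n$ forcing $w_1=w_3=j$ with $0\le j\le m-1$ are all correct, and they recover exactly the paper's answer (the paper isolates the same monomials $(x_1x_3)^j$ through its Lemma~\ref{lemma-tracecriterion}, whose criterion $0<b_k\le a_k+1$ is equivalent to your sub-vector condition under $v=w-b+\mathbf{1}$).

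The one step you have not actually proved is the equality $\tr(\omega_S)=R_{n-m}R_m$. The inclusion $R_{n-m}R_m\subseteq\tr(\omega_S)$ is clear, but the residue computation needs the reverse inclusion, and your justification --- ``because $S$ is a normal domain, $\operatorname{Hom}_S(\omega_S,S)\cong R_m$'' --- only identifies the isomorphism class of $\omega_S^{-1}$, i.e.\ its class in $\operatorname{Cl}(S)$; it does not pin down $(S:_{Q(S)}\omega_S)$ as the concrete fractional ideal $f^{-1}R_m$, which is what the product formula $\tr(\omega_S)=\omega_S(S:_{Q(S)}\omega_S)$ requires. What is actually needed is that every $S$-linear map $R_{n-m}\to R_0$ is multiplication by an element of $R_m$ inside $R$ (equivalently, $\operatorname{Hom}_S(R_i,R_j)=R_{j-i}$ realized by multiplication, a consequence of Auslander's theorem $\operatorname{End}_S(R)\cong R\ast G$ for small $G$); were the natural map $R_m\to\operatorname{Hom}_S(R_{n-m},S)$ not surjective, the trace would be strictly larger and the residue smaller. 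This fact is true and citable, but it is precisely the content of the paper's Proposition~\ref{prop:cyclictrace}, which proves it directly: for $a/b\in(S:_{Q(S)}\omega_S)$ one multiplies by suitably chosen invariant multiples of $f=x_1x_2x_3$ to force $b$ to divide $f$. With that identification supplied (by citation or by the paper's argument), the remainder of your proof is complete and correct.
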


\par In Section 3 we consider the field of complex numbers $\mathbb{C}$ and we turn our attention to the two-dimensional case, where the finite small group $G\subseteq\GL(2,\mathbb{C})$ is not necessarily cyclic.
The nearly Gorenstein property of the corresponding invariant rings was studied by Ding, who gave a complete classification of nearly Gorenstein two-dimensional quotient singularities (see \cite[Proposition~3.5]{Din93}).
However, somehow a case was left out of Ding's classification: it is the invariant ring of the octahedral group $\mathbb{O}_{11}$ obtained by adding to the binary octahedral subgroup of $\SL(2,\mathbb{C})$ a cyclic generator of the form $\mathrm{diag}(\lambda,\lambda)$, where $\lambda\in\mathbb{C}$ is a primitive root of unity of order $22$. We prove that this ring is nearly Gorenstein in Proposition~\ref{prop:O11isNG}.

\par The structure of the paper is the following.
First, in Section~\ref{section:preliminaries} we review some basic definitions and notations on nearly Gorenstein rings and quotient singularities.
Then, in Section~\ref{section:cyclic} we focus on nearly Gorenstein cyclic quotient singularities. In Theorem~\ref{cyclic nearly Gorenstein} we prove a numerical criterion that characterizes them and we use this to provide several classes of nearly Gorenstein rings as stated in Theorem~\ref{thm:introductioncyclic}.
Finally, in Section~\ref{section:dimension2} we study the nearly Gorenstein octahedral singularity mentioned above which completes Ding's classification.

\subsection*{Acknowledgments}
This work began at the Institute of Mathematics of the University of Barcelona (IMUB). The authors would like to express their gratitude to the IMUB for providing a fruitful work environment.

\section{Preliminaries}\label{section:preliminaries}
In this section we recall some basic definitions and standard facts on nearly Gorenstein rings and quotient singularities.

\subsection{Nearly Gorenstein rings}
Let $(R,\mathfrak{m})$ be a Cohen-Macaulay local ring which admits a canonical module $\omega_R$. The trace of the canonical module, denoted by $\tr(\omega_R)$, is the sum of the ideals $\varphi(\omega_R)$ for any $R$-module homomorphism $\varphi:\omega_R\rightarrow R$. In other words, we have
\[
\tr(\omega_R)=\sum_{\varphi\in\mathrm{Hom}_R(\omega_R,R)}\varphi(\omega_R).
\]
The trace of $\omega_R$ describes the non-Gorenstein locus of $R$. In fact, given a prime ideal $\mathfrak{p}\subseteq R$, then $R_{\mathfrak{p}}$ is not Gorenstein if and only if $\tr(\omega_R)\subseteq\mathfrak{p}$ (cf. \cite[Lemma~2.1]{HHS19}).
In particular, since $\tr(\omega_R)$ is an ideal, one has that $R$ is Gorenstein if and only if $\tr(\omega_R)=R$.
\begin{definition}[Herzog, Hibi, Stamate \cite{HHS19}]
	$R$ is called \emph{nearly Gorenstein} if $\mathfrak{m}\subseteq \tr(\omega_R)$.
\end{definition}
It is immediately clear from the definition that Gorenstein rings are nearly Gorenstein and that $R$ is nearly Gorenstein but not Gorenstein if and only if $\tr(\omega_R)=\mathfrak{m}$.
In order to give a measure to the distance of a ring to be Gorenstein or nearly Gorenstein, one defines the \emph{residue} of $R$ as
\[
\res(R)=\ell_R (R/\tr(\omega_R))\in\mathbb{N}\cup\{\infty\}.
\]
The ring $R$ is Gorenstein if and only if $\res(R)=0$ and it is nearly Gorenstein if and only if $\res(R)\leq 1$.

If there exists a canonical module $\omega_R$ that is also an ideal of $R$ we say that $\omega_R$ is a canonical ideal of $R$. In this case there is a useful formula to find its trace. We denote the total ring of fractions of $R$ by $Q(R)$.

\begin{lem}\label{lemma:tracecanonicalideal} \cite[Lemma 1.1]{HHS19}
Let $(R, \mathfrak{m})$ be a local domain with a canonical ideal $\omega_R$. Then, the trace ideal of the canonical module of $R$ is equal to $\tr(\omega_R)=\omega_R (R:_{Q(R)}\omega_R)$.
\end{lem}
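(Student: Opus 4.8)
The plan is to compute $\tr(\omega_R)$ directly from its definition $\tr(\omega_R)=\sum_{\varphi\in\mathrm{Hom}_R(\omega_R,R)}\varphi(\omega_R)$, after first replacing the abstract $\mathrm{Hom}$-module with a concrete fractional ideal inside $Q(R)$. The guiding principle is the classical fact that, over a domain, every homomorphism out of a nonzero (fractional) ideal is simply multiplication by an element of the fraction field; once this is made precise, the trace is forced to be the asserted product.

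First I would establish the identification $\mathrm{Hom}_R(\omega_R,R)\cong(R:_{Q(R)}\omega_R)$, where as usual $(R:_{Q(R)}\omega_R)=\{x\in Q(R): x\omega_R\subseteq R\}$. In one direction, an element $x\in(R:_{Q(R)}\omega_R)$ produces the $R$-linear map "multiplication by $x$", whose image lies in $R$ precisely because $x\omega_R\subseteq R$. For the converse, given $\varphi\colon\omega_R\to R$, I would fix a nonzero element $a\in\omega_R$, which exists since the canonical ideal $\omega_R$ is nonzero. Then for every $b\in\omega_R$ the identity $a\varphi(b)=\varphi(ab)=b\varphi(a)$ holds in $R$, so that $\varphi(b)/b=\varphi(a)/a$ is an element $x\in Q(R)$ that does not depend on $b$, and hence $\varphi$ is multiplication by $x$. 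This is exactly where the hypothesis that $R$ is a domain enters: it is what allows cancellation and division in $Q(R)$ and guarantees that $x=\varphi(a)/a$ is well defined.

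With this identification in hand, the rest is bookkeeping. Under the correspondence each $\varphi$ is given by some $x\in(R:_{Q(R)}\omega_R)$ with $\varphi(\omega_R)=x\omega_R$, whence
\[
\tr(\omega_R)=\sum_{x\in(R:_{Q(R)}\omega_R)}x\omega_R.
\]
Because $(R:_{Q(R)}\omega_R)$ is itself an $R$-submodule of $Q(R)$, this sum of the submodules $x\omega_R$ is nothing but the product ideal $\omega_R(R:_{Q(R)}\omega_R)$, which is the desired equality. The only step requiring genuine care — and the one I would regard as the main obstacle — is the first: showing that an arbitrary $\varphi$ arises from a single fraction $x$. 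This rests squarely on $\omega_R$ being nonzero and on $R$ having no zero divisors, so that the cancellation argument in $Q(R)$ is legitimate; without the domain assumption the identification of $\mathrm{Hom}_R(\omega_R,R)$ with a colon ideal would fail.
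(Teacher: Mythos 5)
Your proof is correct, and since the paper simply cites this as \cite[Lemma 1.1]{HHS19} without reproducing an argument, there is nothing to diverge from: your identification of $\mathrm{Hom}_R(\omega_R,R)$ with the colon ideal $(R:_{Q(R)}\omega_R)$ via $\varphi\mapsto\varphi(a)/a$ is exactly the standard argument underlying the cited result. The one hypothesis worth making explicit is that $\omega_R\neq 0$ (automatic for a canonical ideal), which you do use when choosing the element $a$.
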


In particular, if $\omega_R$ is a canonical ideal, then it is included in $\tr(\omega_R)$ because $1\in( R:_{Q(R)}\omega_R)$.

\subsection{Quotient singularities}
Let $\kk$ be an algebraically closed field and let $G$ be a finite subgroup of $\GL(d,\kk)$ such that the order $|G|$ of $G$ is coprime with the characteristic of  $\kk$.
We consider a power series ring $R=\kk\llbracket x_1,\dots,x_d\rrbracket$ over $\kk$. 
The group $G$ acts linearly on $R$ with the action on the variables $x_1,\dots,x_d$ given by matrix multiplication.  We denote by $R^G$ the ring of invariants under this action and we will call it also  \emph{(non-modular) quotient singularity}.

We recall that an element $\sigma\in G$ is called pseudo-reflection if the fixed subspace $\{v\in\kk^d:\ \sigma v=v\}$ has dimension $d-1$. 
We will always assume that the acting group $G$ is \emph{small}, i.e., that it does not contain pseudo-reflections. This is not restrictive in our setting. In fact, by a theorem of Prill \cite{Pri67} if $G$ is not small we can replace $R$ by another power series ring $S$ and find a small finite linear group $H$ such that $R^G\cong S^H$. This is essentially a consequence of the Chevalley–Shephard–Todd Theorem which implies that the ring of invariants of a finite group generated by pseudo-reflections acting on a power series ring is again a regular local ring. 

Under the previous assumptions, the quotient singularity $R^G$ is a Cohen-Macaulay complete local normal domain of dimension $d$. Moreover, Watanabe \cite{Wat74a, Wat74b} proved that $R^G$ is Gorenstein if and only if $G \subseteq \SL(d,\kk)$. In this case $R^G$ is called \emph{special quotient singularity}. If $G$ is a cyclic group, then $R^G$ is called \emph{cyclic quotient singularity}. The \emph{Kleinian singularities} are the two-dimensional complex special quotient singularities $\mathbb{C}\llbracket x_1,x_2\rrbracket^G$.

In order to study the nearly Gorenstein property of quotient singularities, it is important to understand their canonical module. To this purpose, we introduce the following definition.

\begin{definition}
Let $R=\kk\llbracket x_1,\dots,x_d\rrbracket$ be a power series ring and let $G$ be a finite small subgroup of $\GL(d,\kk)$ such that $\chara\kk\nmid|G|$. We say that an element $f\in R$ is a \emph{$G$-canonical element of $R$} if $\sigma(f)=\det\sigma\cdot f$ for all $\sigma\in G$.
\end{definition}

The previous definition is motivated by the following result due to Singh \cite{Sin70} and Watanabe \cite[Theorem $1'$]{Wat74b}  (see also \cite{Sta78} or \cite[Theorem~6.4.9]{BH98} for an alternative proof).

\begin{thm}[Singh-Watanabe]\label{thm:singh-watanabe}
	Let $R$ and $G$ be as above and let $f\in R$ be a $G$-canonical element, then $fR\cap R^G$ is a canonical ideal of $R^G$.
\end{thm}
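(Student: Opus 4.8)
The plan is to identify the ideal $fR\cap R^G$ with the canonical module of $S:=R^G$ by combining the general behaviour of canonical modules under finite extensions with the semilinear $G$-action coming from the determinant twist of the top differential form. Throughout I use that $S\subseteq R$ is a module-finite extension of Cohen--Macaulay local rings of the same dimension $d$ (the former by the Hochster--Eagon theorem recalled above, the latter because $R$ is regular), and that the non-modular hypothesis $\chara\kk\nmid|G|$ provides the Reynolds operator $\rho\colon R\to S$, $\rho(r)=\tfrac{1}{|G|}\sum_{\sigma\in G}\sigma(r)$, an $S$-linear splitting of the inclusion. Two structural inputs are needed. Since $R$ is regular, it is its own canonical module: $\omega_R=\Omega^d_{R/\kk}=R\,\eta$ with $\eta=dx_1\wedge\cdots\wedge dx_d$, and the linear action of $G$ on the variables makes $\eta$ a semi-invariant of weight $\det$, i.e. $\sigma(\eta)=\det(\sigma)\,\eta$. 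On the other hand, the general theory of canonical modules for a finite extension of Cohen--Macaulay local rings of equal dimension (see \cite{BH98}) yields a canonical isomorphism $\omega_R\cong\mathrm{Hom}_S(R,\omega_S)$ of $R$-modules.

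Next I would pass to $G$-invariants. Let $G$ act on $\mathrm{Hom}_S(R,\omega_S)$ by $(\sigma\star\varphi)(x)=\varphi(\sigma^{-1}(x))$; this is a semilinear action fixing $S$, and using the Reynolds operator one checks that restriction along $S\hookrightarrow R$ induces an isomorphism of $S$-modules $\mathrm{Hom}_S(R,\omega_S)^{G}\cong\mathrm{Hom}_S(S,\omega_S)=\omega_S$: a $\star$-invariant functional is constant on $G$-orbits, hence equals $\varphi\circ\rho$ and is determined by its restriction to $S$, while conversely every $S$-linear map $\psi\colon S\to\omega_S$ extends to the $\star$-invariant functional $\psi\circ\rho$. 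Transporting this through the equivariant isomorphism of the previous step gives $\omega_S\cong(\omega_R)^{G}$. Computing the invariants explicitly with $\omega_R=R\eta$ and $\sigma(\eta)=\det(\sigma)\,\eta$, an element $r\eta$ is invariant exactly when $\sigma(r)=\det(\sigma)^{-1}r$ for all $\sigma$, so that $\omega_S$ is isomorphic, as an $S$-module, to the module of relative invariants $W:=\{r\in R:\sigma(r)=\det(\sigma)^{-1}r\text{ for all }\sigma\in G\}$.

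Finally I would connect $W$ to $fR\cap R^G$ through the $G$-canonical element. Multiplication by $f$ sends $W$ into $S$: if $\sigma(r)=\det(\sigma)^{-1}r$, then $\sigma(fr)=\det(\sigma)f\cdot\det(\sigma)^{-1}r=fr$, so $fr\in R^G$, and evidently $fr\in fR$; conversely every element of $fR\cap R^G$ has the form $fr$ with $fr$ invariant, which forces $r\in W$ since $R$ is a domain and $f\neq0$. Hence $r\mapsto fr$ is an $S$-module isomorphism $W\xrightarrow{\ \sim\ }fR\cap R^G$, and combined with $\omega_S\cong W$ we conclude that the ideal $fR\cap R^G$ of $S=R^G$ is a canonical ideal.

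The main obstacle I anticipate is the bookkeeping of the determinant character. One must verify that the canonical isomorphism $\omega_R\cong\mathrm{Hom}_S(R,\omega_S)$ is genuinely $G$-equivariant for the $\star$-action, and that the resulting twist on $\omega_R\cong R$ is exactly $\det$ rather than $\det^{-1}$, since this is precisely what matches the defining weight $\sigma(f)=\det(\sigma)\,f$ of the $G$-canonical element and makes the multiplication-by-$f$ map land in $R^G$. Getting this twist correct, together with the clean use of the Reynolds operator to identify the $G$-invariants of $\mathrm{Hom}_S(R,\omega_S)$ with $\omega_S$, is where the non-modular hypothesis is indispensable; the remaining verifications are routine.
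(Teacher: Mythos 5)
The paper gives no proof of this theorem: it is quoted as a classical result with references to Singh, Watanabe, Stanley, and \cite[Theorem~6.4.9]{BH98}, so the comparison is really with the cited literature. Your outline follows the standard route of the Bruns--Herzog proof: identify $\omega_{R^G}$ with the $G$-invariants of $\mathrm{Hom}_{R^G}(R,\omega_{R^G})\cong\omega_R$, compute those invariants as the module of relative invariants $W=\{r\in R:\sigma(r)=\det(\sigma)^{-1}r\}$, and note that multiplication by the $G$-canonical element $f$ is an $R^G$-isomorphism $W\to fR\cap R^G$. The last step and the Reynolds-operator identification $\mathrm{Hom}_S(R,\omega_S)^G\cong\omega_S$ are correct and complete as written, and your sign of the twist is the right one: for the Veronese $\frac{1}{n}(1,1)$ your $W$ is generated by the $n-1$ monomials of degree $n-2$, matching the type of the Veronese, whereas the opposite twist would give $3$ generators independently of $n$.

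The genuine gap is the step you flag and then defer as ``bookkeeping'': the $G$-equivariance of the isomorphism $\Omega^d_{R/\kk}\cong\mathrm{Hom}_{R^G}(R,\omega_{R^G})$, with the transported action on $R\eta$ being exactly the $\det$-twist of the natural one. This is not routine, because the duality isomorphism of \cite[Theorem~3.3.7]{BH98} is canonical only up to a unit of $R$; comparing any chosen isomorphism with its $\sigma$-conjugate therefore yields a $1$-cocycle $(u_\sigma)$ with values in $R^{\times}$, not an equality. Completeness of $R$ and invertibility of $|G|$ let you replace $(u_\sigma)$ by a character $\chi$ of $G$, but then the whole content of the theorem is the identification $\chi=1$, and this is precisely where the smallness of $G$ must enter (your argument never uses it). As written, your proof shows only that $fR\cap R^G$ is isomorphic to the module of relative invariants for $\chi\cdot\det^{-1}$ for some unidentified character $\chi$. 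To close the gap one must either realize the duality isomorphism canonically via the trace form of the Galois extension $Q(R^G)\subseteq Q(R)$ and compute the Dedekind different, or argue at height-one primes, where smallness forces $R^G\to R$ to be unramified and the comparison of the two $G$-actions can be made explicitly, and then extend over all of $\spec R^G$ using that both modules are reflexive. This is exactly the work carried out in the sources the paper cites; everything else in your proposal is fine, including the reduction of the stated form of the theorem (the ideal $fR\cap R^G$) to the classical form $\omega_{R^G}\cong W$.
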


\section{Cyclic quotient singularities}\label{section:cyclic}
In this section we focus on the nearly Gorenstein property for cyclic quotient singularities.
We consider a formal power series ring $R=\kk\llbracket x_1, \dots, x_d\rrbracket$ over an algebraically closed field $\kk$ and a finite small cyclic group $G \subseteq \GL(d,\kk)$ such that $|G|=n$ is not zero in $\kk$.

Since $G$ is a finite cyclic group, we can assume that it is generated by a diagonal matrix $\phi$ of the form
\[
\phi =\mathrm{diag}(\lambda^{t_1}, \dots, \lambda^{t_{d}})= \left(\begin{matrix}
\lambda^{t_1} & 0 & \dots & 0 \\
0 & \lambda^{t_2} & \dots & 0 \\
\vdots & \vdots & \ddots & \vdots \\
0 & 0 & \dots & \lambda^{t_d} \\
\end{matrix} \right) 
\]

\vspace{2mm}

\noindent where $\lambda$ is a primitive $n$-th root of unit in $\kk$ and $t_1, \dots, t_d \geq 0$ are integers. 
If $t_d\equiv0\mod n$, one can set $S=\kk\llbracket x_1, \dots, x_{d-1}\rrbracket$ and $H\subseteq\GL(d-1,\kk)$ the group generated by $\mathrm{diag}(\lambda^{t_1}, \dots, \lambda^{t_{d-1}})$, then $R^G$ is nearly Gorenstein if and only if $S^{H}$ is Gorenstein, by \cite[Proposition 4.5]{HHS19}. For this reason, we will assume without loss of generality that $t_1, \dots, t_d \not\equiv 0\mod n$.
In this case, the lack of pseudo-reflections in $G$ is equivalent to the condition $\gcd(t_{i_1}, \dots, t_{i_{d-1}},n)=1$ for every $(d-1)$-tuple with distinct integers $i_1, \dots, i_{d-1} \in \{1, \dots, d\}$.
With these conventions, we denote the cyclic quotient singularity $R^G$ by $\frac{1}{n}(t_1,\dots,t_d)$. We point out that this notation is not unique. For instance $\frac{1}{3}(1,1,2)=\frac{1}{3}(2,2,1)$ are equal because they are invariant rings with respect to the same group.
Since the action of $G$ on $R$ is diagonal, the $\kk$-algebra $\frac{1}{n}(t_1,\dots,t_d)$ can be generated by monomials, more precisely one can choose a (non-minimal) system of generators as follows
\[
R^G=\kk\llbracket x_1^{\alpha_1}\dots x_d^{\alpha_d} \mid \alpha_1+\dots+ \alpha_d \leq n \text{\ and \ } \alpha_1t_1+\dots +\alpha_dt_d \equiv 0 \mod n\rrbracket.
\]

\begin{prop}\label{prop:cyclictrace}
	Let $R$ and $G$ be as above. A $G$-canonical element of $R$ is given by $f=x_1 x_2 \dots x_d$. Moreover, we have
\[
\tr(\omega_{R^G})=(fR)^G(R:_{Q(R)}fR)^G.
\]
\end{prop}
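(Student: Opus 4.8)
The plan is to combine the Singh--Watanabe description of the canonical ideal with the trace formula of Lemma~\ref{lemma:tracecanonicalideal}, and then to transport the colon ideal computed inside $Q(R^G)$ up to $Q(R)$.

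First I would check that $f=x_1\cdots x_d$ is $G$-canonical. Since $G=\langle\phi\rangle$ is cyclic it suffices to verify the defining identity on $\phi$: writing $w=t_1+\cdots+t_d$ one has $\phi(f)=\lambda^{t_1}x_1\cdots\lambda^{t_d}x_d=\lambda^{w}f$, while $\det\phi=\lambda^{w}$, so $\phi(f)=\det\phi\cdot f$. Both $\sigma\mapsto\sigma(f)/f$ and $\sigma\mapsto\det\sigma$ are homomorphisms $G\to\kk^\times$ (the former because the action is diagonal, so $\sigma(f)$ is always a scalar multiple of $f$), and they agree on the generator $\phi$; hence they agree on all of $G$ and $f$ is $G$-canonical. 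By Theorem~\ref{thm:singh-watanabe} the ideal $\omega_{R^G}=fR\cap R^G=(fR)^G$ is a canonical ideal of $R^G$, and Lemma~\ref{lemma:tracecanonicalideal} gives
\[
\tr(\omega_{R^G})=\omega_{R^G}\,(R^G:_{Q(R^G)}\omega_{R^G}).
\]
Since $(fR)^G=\omega_{R^G}$, the whole statement reduces to the identity of fractional ideals $(R^G:_{Q(R^G)}\omega_{R^G})=(R:_{Q(R)}fR)^G$.

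For the reductions I would record three facts. First, $(R:_{Q(R)}fR)=f^{-1}R$, so $(R:_{Q(R)}fR)^G=\{q\in Q(R)^G: qf\in R\}$, where $Q(R)^G=Q(R^G)$ by the standard fact that an invariant fraction is a fraction of invariants. Second, writing $V=\{r\in R:\sigma(r)=(\det\sigma)^{-1}r\ \text{for all }\sigma\in G\}$ for the space of semi-invariants of weight $\det^{-1}$ (spanned by the monomials $x^{\alpha}$ with $\sum_i\alpha_i t_i\equiv -w\bmod n$), a direct cancellation in the domain $R$ shows $\omega_{R^G}=(fR)^G=fV$. The easy inclusion $(R:_{Q(R)}fR)^G\subseteq(R^G:_{Q(R^G)}\omega_{R^G})$ is then immediate: if $q$ is invariant with $qf\in R$, then for any $fr\in fV=\omega_{R^G}$ the element $q\cdot fr=(qf)r$ lies in $R$ and is invariant, hence lies in $R^G$.

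The substance of the proof is the reverse inclusion, which is where the smallness hypothesis enters. Let $q\in Q(R^G)$ satisfy $q\,\omega_{R^G}\subseteq R^G$ and put $p=qf\in Q(R)$. From $\omega_{R^G}=fV$ we get $pr=q(fr)\in R^G\subseteq R$ for every $r\in V$, so $pJ\subseteq R$, where $J$ is the monomial ideal of $R$ generated by $V$, i.e.\ by all $x^{\alpha}$ with $\sum_i\alpha_i t_i\equiv -w\bmod n$. I would then show $\height J\geq 2$. As $J$ is a monomial ideal it is enough to check $J\not\subseteq(x_i)$ for each $i$, that is, that for each $i$ there is a generator with $\alpha_i=0$; equivalently that $-w$ lies in the subsemigroup of $\Z/n\Z$ generated by $\{t_j\bmod n: j\neq i\}$. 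A nonempty subsemigroup of a finite group is a subgroup, so this subsemigroup equals the subgroup generated by the $t_j$ with $j\neq i$; by the smallness condition $\gcd(t_j:j\neq i,\,n)=1$ this subgroup is all of $\Z/n\Z$, and in particular contains $-w$. Hence $\height J\geq 2$, and since $R$ is a regular (so normal) domain of dimension $d\geq 2$, an ideal of height at least $2$ is contained in no height-one prime, which yields $(R:_{Q(R)}J)=R$. Therefore $p=qf\in R$, so $q\in(R:_{Q(R)}fR)^G$, completing the colon identity and hence the proposition. The main obstacle is precisely this height computation: everything else is formal bookkeeping, but controlling $\height J$ is exactly what uses, and is forced by, the hypothesis that $G$ contains no pseudo-reflections.
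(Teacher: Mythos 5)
Your proof is correct, and although its skeleton coincides with the paper's (check that $f$ is $G$-canonical on the generator $\phi$, invoke Singh--Watanabe and Lemma~\ref{lemma:tracecanonicalideal}, reduce everything to the colon identity $(R^G:_{Q(R^G)}\omega_{R^G})=(R:_{Q(R)}fR)^G$, and dispose of the easy inclusion), your treatment of the hard inclusion is genuinely different. The paper argues on an explicit fraction $a/b$ in lowest terms: for each index $i$ it uses $\gcd(t_j: j\neq i,\,n)=1$ to manufacture one invariant multiple $h=(x_1^{a_1}\cdots x_{d-1}^{a_{d-1}})^r f$ of $f$ whose $x_i$-degree equals $1$, deduces from $ah/b\in R^G$ and unique factorization that $x_i^2\nmid b$, and concludes that $b$ is squarefree and hence divides $f$. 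You instead package all semi-invariants of weight $\det^{-1}$ into the monomial ideal $J=VR$, prove that $J$ has height at least two from the same gcd condition (via the observation that a nonempty subsemigroup of a finite group is a subgroup), and finish using normality of $R$: an element of $Q(R)$ multiplying an ideal of height at least two into $R$ already lies in $R$. The paper's version is more elementary, needing only divisibility in a UFD; yours is more structural, as it isolates the identity $\omega_{R^G}=fV$ and makes visible that smallness is precisely the condition that the ideal generated by the relative invariants has height at least two --- a formulation that would adapt beyond the diagonal cyclic case once the monomial reasoning is replaced by a divisorial one. Both arguments are complete; the only step you should make explicit in a final write-up is the standard fact that the minimal primes of a monomial ideal are generated by subsets of the variables, which is what converts ``$J\not\subseteq(x_i)$ for every $i$'' into the height bound, and you may note that the degenerate case $d=1$ (where a small group is trivial) is vacuous.
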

\begin{proof} 
 Since $G$ is generated by $\phi=\mathrm{diag}(\lambda^{t_1}, \dots, \lambda^{t_{d}})$, to prove that $f$ is a $G$-canonical element it is enough to observe that $\phi(f)=\lambda^{t_1} \dots \lambda^{t_d} f=\det(\phi)f$. 

We now prove that $\tr(\omega_{R^G})=(fR)^G(R:_{Q(R)}fR)^G$. 
First, notice that by Lemma~\ref{lemma:tracecanonicalideal} and Theorem~\ref{thm:singh-watanabe} we have $\tr(\omega_{R^G})=(fR\cap R^G)(R^G :_{Q(R^G)} (fR \cap R^G))$. So, since $fR\cap R^G=(fR)^G$, it is enough to  prove that
\[
R^G :_{Q(R^G)} (fR \cap R^G)=(R:_{Q(R)}fR)^G.
\]
The inclusion $(R:_{Q(R)}fR)^G \subseteq (R^G :_{Q(R^G)} (fR \cap R^G))$ is clear. Conversely, consider an element $a/b \in (R^G :_{Q(R^G)}(fR \cap R^G))$ with $\gcd(a,b)=1$.
By hypothesis $\gcd(t_1, \dots, t_{d-1},n)=1$ and so there exist $a_1, \dots, a_{d-1}$ positive integers such that $a_1 t_1 + \dots + a_{d-1}t_{d-1} \equiv 1 \mod n$. Therefore, there exists a positive $r$ such that 
\[(a_1 t_1 + \dots + a_{d-1}t_{d-1})r+ t_1+ \dots + t_{d-1}+t_d \equiv 0 \mod n. \]
This implies that $h=(x_1^{a_1}x_2^{a_2}\dots x_{d-1}^{a_{d-1}})^rf \in R^G$, because $\phi(h)=(\prod_{i=1}^{d-1}\lambda^{ra_i t_i}  \prod_{i=1}^{d}\lambda^{t_i})h=h$. It follows that $ah/b \in R^G$, then $b$ is a monomial and $x_d^2$ does not divide $b$. Since we can repeat the same reasoning with respect to every variable, we get that $b$ is squarefree and, therefore, $b$ divides $f$. This means that $af/b \in R$ and, then, $a/b \in (R:_{Q(R)} fR)^G$ as required.
\end{proof}

\begin{lem}\label{lemma-tracecriterion}
Let $h=x_1^{a_1} \dots x_d^{a_d}$ be a monomial of $R^G$ and let $f=x_1 x_2 \dots x_d$. Then, $h\in\tr(\omega_{R^G})$ if and only if one of the following two conditions holds:
\begin{enumerate}
	\item $a_i>0$ for all $i=0,\dots,d$;
	\item $h=x_{\sigma(1)}^{a_{1}}\cdots x_{\sigma(j)}^{a_{j}}$ with $j<d$, where $\sigma$ is a permutation of $\{1,\dots,d\}$ and there exist integers $b_1,\dots,b_j$ such that $0 < b_k \leq a_k+1$ for every $k \in \{1,\dots, j\}$ and  $\sum_{k=1}^j b_k t_{\sigma(k)} \equiv - \sum_{k=j+1}^d t_{\sigma(k)} \mod n$.
\end{enumerate}
\end{lem}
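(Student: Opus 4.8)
The plan is to start from Proposition~\ref{prop:cyclictrace}, which gives $\tr(\omega_{R^G})=(fR)^G(R:_{Q(R)}fR)^G$ with $f=x_1\cdots x_d$, and to make both fractional ideals completely explicit as $\kk$-spans of (Laurent) monomials. First I would note that $fR=(f)$ is principal, so $R:_{Q(R)}fR=\frac1f R$. Since $G$ acts diagonally, every monomial is an eigenvector and taking $G$-invariants commutes with restricting to the monomial basis (the Reynolds operator is available because $\chara\kk\nmid n$). Hence $(fR)^G$ is $\kk$-spanned by the invariant monomials $x_1^{c_1}\cdots x_d^{c_d}$ with all $c_i\ge1$ and $\sum_i c_it_i\equiv0\pmod n$, while $(\frac1f R)^G$ is $\kk$-spanned by the invariant Laurent monomials $x_1^{e_1}\cdots x_d^{e_d}$ with all $e_i\ge-1$ and $\sum_i e_it_i\equiv0\pmod n$ (writing $\frac1f\cdot x_1^{g_1}\cdots x_d^{g_d}=x_1^{g_1-1}\cdots x_d^{g_d-1}$ with $g_i\ge0$).

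Next, since the product of two monomially-spanned (fractional) ideals is again $\kk$-spanned by the pairwise products of their spanning monomials, and distinct monomials are $\kk$-linearly independent, a monomial $h=x_1^{a_1}\cdots x_d^{a_d}$ lies in $\tr(\omega_{R^G})$ if and only if it factors as $h=uv$ with $u=\prod_i x_i^{c_i}$ and $v=\prod_i x_i^{e_i}$ from the two spanning sets. This forces $c_i+e_i=a_i$, $c_i\ge1$, and $e_i\ge-1$, i.e. $1\le c_i\le a_i+1$. Because $h\in R^G$ yields $\sum_i a_it_i\equiv0$, the invariance of $u$ (that is, $\sum_i c_it_i\equiv0$) is equivalent to the invariance of $v$, so only one congruence needs checking. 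This produces the clean reformulation
\[
h\in\tr(\omega_{R^G})\quad\Longleftrightarrow\quad\text{there exist }c_1,\dots,c_d\in\Z\text{ with }1\le c_i\le a_i+1\text{ and }\sum_{i=1}^{d}c_it_i\equiv0\ (\mathrm{mod}\ n).
\]

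Finally I would match this reformulation to the two stated conditions by distinguishing whether all exponents are positive. If $a_i>0$ for every $i$, then the choice $c_i=a_i$ satisfies $1\le c_i\le a_i+1$ and $\sum_i c_it_i=\sum_i a_it_i\equiv0$, so $h\in\tr(\omega_{R^G})$; this is exactly condition~(1). If instead some exponents vanish, relabel by a permutation $\sigma$ so that precisely $x_{\sigma(1)},\dots,x_{\sigma(j)}$ (with $j<d$) occur with positive exponents. For each vanishing exponent the constraint $1\le c_i\le a_i+1=1$ forces $c_i=1$, so these variables contribute $\sum_{k=j+1}^{d}t_{\sigma(k)}$ to the congruence; setting $b_k=c_{\sigma(k)}$ for $k\le j$ (so $0<b_k\le a_k+1$) turns the reformulation into $\sum_{k=1}^{j}b_kt_{\sigma(k)}\equiv-\sum_{k=j+1}^{d}t_{\sigma(k)}\pmod n$, which is exactly condition~(2). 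Both implications are immediate once this substitution is made.

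The computation is essentially routine; the step to handle with care is the passage from the abstract product of fractional ideals to the concrete monomial description, namely justifying that $(fR)^G$ and $(\frac1f R)^G$ are spanned by the asserted (Laurent) monomials and that their product is spanned by pairwise products — this rests on the diagonality of the action together with the $\kk$-linear independence of monomials. The only other subtlety is the forced value $c_i=1$ at the vanishing exponents, which is the small bookkeeping observation producing the shift $-\sum_{k>j}t_{\sigma(k)}$ on the right-hand side of condition~(2).
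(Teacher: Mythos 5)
Your argument is correct and follows essentially the same route as the paper: both start from Proposition~\ref{prop:cyclictrace}, use the diagonality of the action to describe $(fR)^G$ and $(R:_{Q(R)}fR)^G$ by (Laurent) monomials, and reduce membership of $h$ to the exponent bookkeeping $1\le c_i\le a_i+1$ with a single congruence. Your writing of $R:_{Q(R)}fR=\frac1f R$ and the observation that invariance of $u$ and of $v$ are equivalent given $h\in R^G$ are only cosmetic streamlinings of the paper's irreducible-fraction formulation.
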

\begin{proof}
If $a_i>0$ for every $i$, we observe that $h \in fR \subseteq \tr(\omega(R^G))$ because $1 \in (R:_{Q(R)}fR)^G$.
Therefore, without loss of generality we suppose that $h=x_1^{a_1} \dots x_j^{a_j}$ for some $j < d$.
We recall that by Proposition~\ref{prop:cyclictrace} $\tr(\omega_{R^G})=(fR)^G(R:_{Q(R)}fR)^G$.
Moreover, we observe that $(fR)^G$ and $(R:_{Q(R)}fR)^G$ are generated by monomials and quotient of monomials respectively  because $f$ is a monomial and $G$ is cyclic. Therefore, since $h$ is a monomial, we have that $h\in\tr(\omega_{R^G})$ if and only if there is an equality
\begin{equation} \label{h}
h=(x_1^{b_1} \dots x_d^{b_d}) \ \frac{x_1^{c_1} \dots x_d^{c_d}}{x_1^{e_1} \dots x_d^{e_d}}
\end{equation}
with $x_1^{b_1} \dots x_d^{b_d} \in (fR)^G$ and $x_1^{c_1} \dots x_d^{c_d}/x_1^{e_1} \dots x_d^{e_d} \in (R:_{Q(R)}fR)^G$, where we assume that the fraction is irreducible.
Since $f=x_1 \dots x_d$, it follows that  $b_i \geq 1 $ and $e_i \leq 1$ for every $i=1, \dots, d$.
Moreover, for every $k=j+1, \dots, d$ we have $a_k=0$ which implies  $b_k=e_k=1$ and $c_k=0$.
We also note that $a_i=b_i+c_i-e_i$ for every $i$ and, therefore, $1 \leq b_i \leq a_i+1$.
Since $h \in R^G$, if $x_1^{b_1} \dots x_j^{b_j} x_{j+1} \dots x_d$ is invariant under the action of $G$, also $x_1^{c_1} \dots x_j^{c_j}/x_1^{e_1} \dots x_j^{e_j} x_{j+1} \dots x_d$ is invariant. 
Recall that $R^G=\kk\llbracket x_1^{\alpha_1}\dots x_d^{\alpha_d} \mid \alpha_1+\dots+ \alpha_d \leq n \text{\ and \ } \alpha_1t_1+\dots +\alpha_dt_d \equiv 0 \mod n\rrbracket$. 
Then, it is possible to write $h$ as in (\ref{h}) if and only if there exist integers $b_1,\dots,b_j$ such that $1 \leq b_i \leq a_i+1$ and $\sum_{i=1}^j b_i t_{i} + \sum_{k=j+1}^d t_{k} \equiv 0 \mod n$.
\end{proof}

Observing that $R^G$ is nearly Gorenstein if and only if the conditions of Lemma~\ref{lemma-tracecriterion} hold for every generator of the maximal ideal of $R^G$ we get the following criterion.

\begin{thm} \label{cyclic nearly Gorenstein}
The ring $R^G$ is nearly Gorenstein if and only if for every $0<i<d$, every permutation $\sigma$ of $\{1,\dots,d\}$ and every $i$-tuple $(a_1, \dots, a_i)$ of positive integers such that $a_1+\dots+a_i \leq n$ and $a_1 t_{\sigma(1)}+\dots +a_i t_{\sigma(i)} \equiv 0 \mod n$, there exist integers $b_1, \dots, b_i$ such that $\sum_{j=1}^i b_j t_{\sigma(j)} \equiv - \sum_{k=i+1}^d t_{\sigma(k)} \mod n$ and $0 < b_j \leq a_j+1$ for every $j \in \{1,\dots, i\}$.
\end{thm}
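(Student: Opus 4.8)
The plan is to unwind the definition of nearly Gorenstein into a condition on the monomial generators of the maximal ideal $\mathfrak{m}$ of $R^G$, and then to invoke Lemma~\ref{lemma-tracecriterion} on each of them. By definition $R^G$ is nearly Gorenstein precisely when $\mathfrak{m}\subseteq\tr(\omega_{R^G})$. Since $\tr(\omega_{R^G})=(fR)^G(R:_{Q(R)}fR)^G$ is a monomial ideal by Proposition~\ref{prop:cyclictrace}, and $\mathfrak{m}$ is generated as an $R^G$-ideal by the invariant monomials of positive degree occurring in the explicit presentation of $R^G$, the containment $\mathfrak{m}\subseteq\tr(\omega_{R^G})$ holds if and only if each such monomial generator lies in $\tr(\omega_{R^G})$. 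The first step is therefore to reduce the problem to testing, one at a time, every monomial $h=x_1^{\alpha_1}\cdots x_d^{\alpha_d}$ of positive degree with $\alpha_1+\cdots+\alpha_d\leq n$ and $\sum_i\alpha_i t_i\equiv0\mod n$.

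Next I would dispose of the generators of full support: if every $\alpha_i>0$, then $h\in fR\subseteq\tr(\omega_{R^G})$ by condition~(1) of Lemma~\ref{lemma-tracecriterion}, so such $h$ impose no constraint, which is exactly why the criterion only involves $0<i<d$. For a generator missing at least one variable, I let $i$ be the number of variables actually occurring and record which they are by choosing a permutation $\sigma$ of $\{1,\dots,d\}$ that places the occurring variables in the positions $1,\dots,i$; setting $a_k=\alpha_{\sigma(k)}$ turns the membership $h\in R^G$ into the conditions $a_1+\cdots+a_i\leq n$ and $\sum_{k=1}^i a_k t_{\sigma(k)}\equiv0\mod n$. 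Condition~(2) of Lemma~\ref{lemma-tracecriterion} then asserts that $h\in\tr(\omega_{R^G})$ is equivalent to the existence of integers $b_1,\dots,b_i$ with $0<b_k\leq a_k+1$ and $\sum_{k=1}^i b_k t_{\sigma(k)}\equiv-\sum_{k=i+1}^d t_{\sigma(k)}\mod n$, which is verbatim the condition in the statement.

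The final step is bookkeeping: as $h$ runs over all positive-degree monomial generators missing a variable, the data $(i,\sigma,(a_1,\dots,a_i))$ runs over exactly the quantification in the theorem, namely $i\in\{1,\dots,d-1\}$, $\sigma$ a permutation of $\{1,\dots,d\}$, and $(a_1,\dots,a_i)$ an admissible tuple of positive integers (a given monomial may be encoded by several $\sigma$, which is harmless). The one point deserving genuine care, and where I expect the main subtlety to lie, is the justification that it suffices to test generators of total degree at most $n$ rather than all invariant monomials in $\mathfrak{m}$. This rests on the fact that the displayed degree-$\leq n$ monomials generate $R^G$ as a $\kk$-algebra, so that the positive-degree ones among them generate $\mathfrak{m}$ as an ideal; the containment of $\mathfrak{m}$ into the ideal $\tr(\omega_{R^G})$ then needs only be verified on this generating set. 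Once this reduction is made precise, the equivalence with the stated numerical criterion is immediate.
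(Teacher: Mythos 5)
Your proposal is correct and follows essentially the same route as the paper, which justifies the theorem in a single sentence as the observation that one need only apply Lemma~\ref{lemma-tracecriterion} to each monomial generator of the maximal ideal; you have simply spelled out that observation, including the reduction to the degree-$\leq n$ generating monomials and the dichotomy between full-support generators and those missing a variable.
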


We want to use the previous theorem to find examples of nearly Gorenstein cyclic quotient singularities.
First, we recall that by Watanabe's Theorem a $\frac{1}{n}(t_1,\dots,t_d)$-singularity is Gorenstein if and only if the acting group $G$ is contained in $\SL(d,\kk)$ which is in turn equivalent to the condition $t_1+\cdots+t_d\equiv 0\mod n$.
For instance, for each dimension $d$ the singularity $\frac{1}{n}(1,\dots,1,t_d)$ with $t_d\equiv-d+1\mod n$ is Gorenstein.
In a similar fashion, we can obtain examples of nearly Gorenstein cyclic quotient singularities in every dimension.

\begin{prop}\label{prop:cyclicNGfamily}
Let $d\geq3$ and $n\geq3$ be integers such that $\gcd(-d+2,n)=1$. Choose an integer $t_d\geq1$ such that $t_d \equiv -d+2 \mod n$. Then, the quotient singularity $\frac{1}{n}(1,\dots,1,t_d)$ is nearly Gorenstein, but not Gorenstein.
\end{prop}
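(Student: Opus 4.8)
The plan is to apply the numerical criterion of Theorem~\ref{cyclic nearly Gorenstein} directly, after a couple of preliminary reductions. The \emph{not Gorenstein} part is immediate from Watanabe's criterion recalled in the text: for $\frac{1}{n}(1,\dots,1,t_d)$ we have $t_1+\cdots+t_d\equiv(d-1)+(-d+2)\equiv 1\pmod n$, which is nonzero modulo $n$, so $G\not\subseteq\SL(d,\kk)$ and $R^G$ is not Gorenstein. I would also briefly record that the standing hypotheses hold: each exponent is nonzero modulo $n$ (for $t_d$ this uses $\gcd(-d+2,n)=1$ together with $n\geq3$, which rules out $t_d\equiv0$), and $G$ is small because, since $d\geq3$, every $(d-1)$-tuple of exponents contains an entry equal to $1$ and hence is coprime to $n$.

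The key reduction is to rewrite the conclusion of the criterion in a more convenient form. Set $s=\sum_{k=1}^d t_k\equiv1\pmod n$. Since $\sigma$ permutes all $d$ indices, the target residue satisfies $-\sum_{k=i+1}^d t_{\sigma(k)}\equiv -s+\sum_{j=1}^i t_{\sigma(j)}\equiv -1+\sum_{j=1}^i t_{\sigma(j)}\pmod n$. Substituting $c_j:=b_j-1$, the constraint $0<b_j\leq a_j+1$ becomes $0\leq c_j\leq a_j$, and the congruence collapses to the clean condition $\sum_{j=1}^i c_j t_{\sigma(j)}\equiv -1\pmod n$. Thus the whole criterion reduces to the following: for every admissible tuple $(a_1,\dots,a_i)$ of positive integers with $\sum a_j\le n$ and $\sum_{j=1}^i a_j t_{\sigma(j)}\equiv0\pmod n$, one must produce integers $c_j\in\{0,1,\dots,a_j\}$ with $\sum_{j=1}^i c_j t_{\sigma(j)}\equiv -1\pmod n$.

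Now I would run the main case analysis. Among the chosen exponents $t_{\sigma(1)},\dots,t_{\sigma(i)}$ at most one equals $t_d$, and all the others equal $1$. If at least one of them is an \emph{ordinary} coordinate (equal to $1$), then I take $c_j=a_j$ for all $j$, which gives $\sum c_j t_{\sigma(j)}=\sum a_j t_{\sigma(j)}\equiv0$, and then decrease that ordinary $c_j$ by $1$ (legitimate since $a_j\geq1$), lowering the sum by exactly $1$ to reach $\equiv -1\pmod n$. This settles every case with $i\geq2$, as well as $i=1$ when the single coordinate is ordinary. The only remaining situation is $i=1$ with $\sigma(1)=d$: here admissibility reads $a_1 t_d\equiv0\pmod n$, and since $\gcd(t_d,n)=\gcd(-d+2,n)=1$ this forces $a_1=n$; invertibility of $t_d$ modulo $n$ then yields a unique $c_1\in\{0,\dots,n-1\}$ with $c_1 t_d\equiv -1$, which is nonzero and satisfies $c_1\leq n-1\leq a_1$, as required.

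The only genuinely delicate point, and precisely where the hypothesis $\gcd(-d+2,n)=1$ is indispensable, is this last degenerate case $i=1$ with the special coordinate: without coprimality one could neither conclude $a_1=n$ nor solve $c_1 t_d\equiv -1$. Everywhere else the argument is the soft ``decrement one ordinary coordinate'' move, so I expect no real obstacle beyond carefully isolating and disposing of this single edge case.
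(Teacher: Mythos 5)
Your proof is correct and follows essentially the same route as the paper: verify non-Gorensteinness via Watanabe's criterion and then check the numerical condition of Theorem~\ref{cyclic nearly Gorenstein} by a case analysis whose witnesses ($b_j=a_j+1$ for all but one ordinary index, and the forced $a_1=n$ when $i=1$ with the special coordinate) coincide with the paper's choices. Your substitution $c_j=b_j-1$ together with the observation $\sum_k t_k\equiv 1\pmod n$ is a clean normalization that unifies the paper's separate cases, but it does not change the underlying argument.
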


\begin{proof}
	As usual let $R=\kk\llbracket x_1,\dots,x_d\rrbracket$ and consider the group $G$ generated by $\mathrm{diag}(\lambda,\dots,\lambda,\lambda^{t_d})$ for a primitive $n$-th root of unity $\lambda\in\kk$, so that $R^G=\frac{1}{n}(1,\dots,1,t_d)$.
	It is clear that $R^G$ is not Gorenstein, since $1+\cdots+1-d+2=1\not\equiv0\mod n$.
	We prove that $R^G$ is nearly Gorenstein by using Theorem~\ref{cyclic nearly Gorenstein}.
	Consider  $0<i<d$, a permutation $\sigma$ of $\{1,\dots,d\}$ and a $i$-tuple $(a_1, \dots, a_i)$ of positive integers such that $a_1+\dots+a_i \leq n$ and $a_1 t_{\sigma(1)}+\dots +a_i t_{\sigma(i)} \equiv 0 \mod n$.
	If $t_{\sigma(1)}=\cdots=t_{\sigma(i)}=1$, then we have $a_1+\dots+a_i=n$. Therefore, the sum $\sum_{j\leq i}b_j$ for $0<b_j\leq a_j+1$ runs over all possible residues modulo $n$, thus there exist $b_j$'s such that $\sum_{j=1}^i b_j t_{\sigma(j)} \equiv - \sum_{k=i+1}^d t_{\sigma(k)} \mod n$ is satisfied.
	\par Suppose now that $t_{\sigma(1)}=\cdots=t_{\sigma(i-1)}=1$ and $t_{\sigma(i)}=t_d\equiv-d+2\mod n$.
	We distinguish two possibilities.
	If $i=1$, then we have $(-d+2)a_1\equiv0\mod n$ and, since $-d+2$ is coprime with $n$, we obtain $a_1\equiv0\mod n$, which forces $a_1=n$ being $a_1\leq n$.
	Therefore, $0<b_1\leq a_1+1$ ranges over all possible residues modulo $n$ and we conclude as before.
	Suppose now that $i>1$. We choose $b_i=a_i+1$, $b_{i-1}=a_{i-1}$, and $b_{j}=a_j+1$ for all $j=1,\dots,i-2$. Then, we obtain
	\[
	b_1+\dots+b_{i-1}+b_i t_d\equiv a_1+\dots+a_{i-1}+(-d+2)a_i+(i-2)\cdot 1+(-d+2) \equiv -(d-i)\mod n.
	\]
	Hence, $R^G$ is nearly Gorenstein.
\end{proof}

The case $d=2$ was left out from the previous proposition, but in fact two-dimensional cyclic quotient singularities are always nearly Gorenstein.

\begin{cor}\label{cor:cyclicarenearlyG}
	If $R=\kk\llbracket x_1,x_2\rrbracket$ and $G$ is cyclic, then $R^G$ is nearly Gorenstein.
\end{cor}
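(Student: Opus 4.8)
The plan is to apply the numerical criterion of Theorem~\ref{cyclic nearly Gorenstein} directly, exploiting the fact that in dimension $d=2$ the smallness hypothesis is especially strong. Indeed, for $d=2$ the $(d-1)$-tuples appearing in the smallness condition are just the single entries $t_1$ and $t_2$, so the hypothesis $\gcd(t_{i_1},\dots,t_{i_{d-1}},n)=1$ becomes simply $\gcd(t_1,n)=\gcd(t_2,n)=1$; in particular each $t_i$ is a unit modulo $n$. I would record this observation first, since it is the engine of the whole argument.

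Next I would note that when $d=2$ the only index to test in Theorem~\ref{cyclic nearly Gorenstein} is $i=1$, because the criterion ranges over $0<i<d=2$. Thus the criterion collapses to the single statement: for each permutation $\sigma$ of $\{1,2\}$ and each positive integer $a_1\le n$ with $a_1 t_{\sigma(1)}\equiv 0\mod n$, there must exist an integer $b_1$ with $0<b_1\le a_1+1$ and $b_1 t_{\sigma(1)}\equiv -t_{\sigma(2)}\mod n$.

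I would then fix such a $\sigma$ and $a_1$ and verify this. Because $\gcd(t_{\sigma(1)},n)=1$, the congruence $a_1 t_{\sigma(1)}\equiv 0\mod n$ forces $a_1\equiv 0\mod n$, and since $0<a_1\le n$ this leaves only $a_1=n$. It then remains to solve $b_1 t_{\sigma(1)}\equiv -t_{\sigma(2)}\mod n$ with $0<b_1\le n+1$. As $t_{\sigma(1)}$ is invertible modulo $n$, this congruence has a unique solution class modulo $n$, and I may choose its representative $b_1\in\{1,\dots,n\}$; this satisfies $0<b_1\le n=a_1\le a_1+1$, so the criterion holds and $R^G$ is nearly Gorenstein.

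There is essentially no obstacle here: the entire content is that in dimension two smallness makes both weights invertible modulo $n$, which simultaneously pins down $a_1=n$ (giving the maximal room $a_1+1=n+1$ for $b_1$) and guarantees solvability of the target congruence. The only point requiring mild care is the bookkeeping of ranges, namely checking that the representative $b_1\in\{1,\dots,n\}$ fits within $0<b_1\le a_1+1$, which is immediate once $a_1=n$ is known. (Under our standing convention $t_1,t_2\not\equiv 0\mod n$ this is the whole story; the excluded degenerate case where a weight is $\equiv 0\mod n$ reduces via \cite[Proposition~4.5]{HHS19} to the Gorensteinness of a one-dimensional regular ring, which is automatic.)
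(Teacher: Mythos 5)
Your proposal is correct and follows essentially the same route as the paper's proof: smallness forces $\gcd(t_{\sigma(1)},n)=1$, hence $a_1=n$, and then a representative of the unique solution class of $b_1 t_{\sigma(1)}\equiv -t_{\sigma(2)}\bmod n$ in $\{1,\dots,n\}$ fits the required range. No gaps.
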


\begin{proof}
	Let $a_1 \leq n$ be such that $a_1 t_{\sigma(1)} \equiv 0 \mod n$. Since there are no pseudo-reflections in $G$, we have $\gcd(t_{\sigma(1)}, n)=1$ and, then, $n$ divides $a_1$. In particular, $a_1 = n$. Therefore, there is a solution of the equation $b_1 t_{\sigma(1)} \equiv -t_{\sigma(2)} \mod n$ such that $0 < b_1 \leq n=a_1$ and Theorem \ref{cyclic nearly Gorenstein} implies that $R^G$ is nearly Gorenstein.
\end{proof}

Now, we focus on groups with small order. 
We recall that for $n=2$ and $d$ even the ring $R^G$ is Gorenstein since $G\subseteq \SL(d,\kk)$. 
More generally, we prove that for $n\leq3$  it is always nearly Gorenstein.

\begin{cor}\label{cor:nleq3isNG}
	Let $G$ be a cyclic group of order at most $3$, then $R^G$ is nearly Gorenstein.
\end{cor}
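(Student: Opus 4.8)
The plan is to apply the criterion of Theorem~\ref{cyclic nearly Gorenstein}. We may assume $G$ is nontrivial, so $n\in\{2,3\}$, and because $G$ contains no pseudo-reflections every $t_i$ is coprime to $n$: for $n=2$ this says each $t_i$ is odd, while for $n=3$, being prime, it simply records $t_i\not\equiv0\mod 3$. The decisive simplification is that the admissibility constraint $a_1+\dots+a_i\leq n$ with each $a_j\geq1$ forces $i\leq n\leq3$ and leaves only a handful of possible tuples $(a_1,\dots,a_i)$ to inspect. I would therefore reduce the whole statement to checking, for each such tuple satisfying $a_1t_{\sigma(1)}+\dots+a_it_{\sigma(i)}\equiv0\mod n$, that suitable $b_j$ exist.

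The key uniform observation, which I would isolate first, is that it suffices to prove the stronger claim that the attainable residues
\[
\Bigl\{\ \sum\nolimits_{j=1}^{i} b_j t_{\sigma(j)} \bmod n \ :\ 0<b_j\leq a_j+1\ \Bigr\}
\]
exhaust all of $\Z/n\Z$; then the required target $-\sum_{k=i+1}^d t_{\sigma(k)}$ is hit no matter what its value is. For $i=1$ this is immediate and works for any $n$: admissibility $a_1t_{\sigma(1)}\equiv0\mod n$ together with $\gcd(t_{\sigma(1)},n)=1$ forces $a_1=n$, so $b_1$ runs over the $n+1$ consecutive integers $1,\dots,n+1$, a complete residue system modulo $n$, and multiplying by the unit $t_{\sigma(1)}$ sweeps out every residue. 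This already disposes of $i=1$ in all cases, in particular settling $d=2$ entirely (as in Corollary~\ref{cor:cyclicarenearlyG}).

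What remains is the finite enumeration for $i\geq2$. For $n=2$ the only such tuple is $(a_1,a_2)=(1,1)$ (which requires $d\geq3$), and the sums $b_1+b_2$ with $b_j\in\{1,2\}$ already realize both parities. For $n=3$ the admissible shapes are $(1,1)$, $(1,2)$ and $(1,1,1)$, and the congruence $\sum a_jt_{\sigma(j)}\equiv0\mod 3$ with $t_{\sigma(j)}\in\{1,2\}$ pins down the residues of the exponents involved: for $(1,1)$ one gets $\{t_{\sigma(1)},t_{\sigma(2)}\}\equiv\{1,2\}$, for $(1,2)$ one gets $t_{\sigma(1)}\equiv t_{\sigma(2)}$, and for $(1,1,1)$ all three are congruent. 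In each case a one-line computation shows the attainable sums cover $\Z/3\Z$; for instance in the $(1,1)$ case with $t_{\sigma(1)}\equiv1$ and $t_{\sigma(2)}\equiv2$ the four values $b_1+2b_2$ for $b_1,b_2\in\{1,2\}$ are $\equiv0,2,1,0$, while in the $(1,2)$ case the slightly longer range $b_2\in\{1,2,3\}$ supplies the needed extra slack.

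I expect no genuine obstacle here: the smallness of $n$ does all the work, the recurring mechanism being that the interval $0<b_j\leq a_j+1$ is exactly one longer than the range forced by admissibility, which is just enough to surject onto $\Z/n\Z$. The only care required is bookkeeping—translating the no-pseudo-reflection hypothesis into coprimality and verifying that the short list of admissible tuples is exhaustive. This slack degenerates for larger $n$, consistent with $\tfrac{1}{4}(1,2,3)$ failing to be nearly Gorenstein.
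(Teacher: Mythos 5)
Your proof is correct and follows essentially the same route as the paper: both apply the numerical criterion of Theorem~\ref{cyclic nearly Gorenstein} and settle it by elementary arithmetic modulo $2$ or $3$, using that each $t_j$ is a unit modulo $n$ because $G$ contains no pseudo-reflections. The only difference is organizational: you enumerate the finitely many admissible tuples $(a_1,\dots,a_i)$ forced by $\sum_j a_j\le n$ and verify that the attainable sums surject onto $\Z/n\Z$, whereas the paper fixes the target residue $-\sum_{k>i}t_{\sigma(k)}$ and reaches it by perturbing $b_j=a_j$ by $+1$ in one or two coordinates.
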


\begin{proof}
	We prove only the case of order $3$, since the case when $|G|=2$ can be done in the same way.
	So, assuming $|G|=3$, we will prove that $R^G$ is nearly Gorenstein by using Theorem \ref{cyclic nearly Gorenstein}. Let $(a_1, \dots, a_i)$ be positive integers such that $a_1 t_{\sigma(1)}+ \dots + a_i t_{\sigma(i)} \equiv 0 \mod 3$ for a permutation $\sigma$ of $\{1, \dots, d\}$. We need to find positive integers $b_j \leq a_j+1$ such that $\sum_{j=1}^i b_j t_{\sigma(j)}\equiv -\sum_{k=i+1}^d t_{\sigma(k)} \mod 3$. If $-\sum_{k=i+1}^d t_{\sigma(k)} \equiv 0 \mod 3$, it is enough to set $b_j=a_j$ for every $j=1, \dots, i$. If $-\sum_{k=i+1}^d t_{\sigma(k)} \equiv 1 \mod 3$ and there exists $1 \leq p\leq i$ such that $t_{\sigma(p)}\equiv 1 \mod 3$, then we can set $b_p=a_p+1$ and $b_j=a_j$ for $1 \leq j \leq i$, $j \neq p$. Assume now that $t_{\sigma(j)} \equiv 2 \mod 3$ for every $j=1, \dots, i$. If $i=1$, then $a_1$ has to be equal to $3$ and we can put $b_1=2$, otherwise it is enough to set $b_1=a_1+1$, $b_2=a_2+1$ and $b_j=a_j$ for $3 \leq j \leq i$. 
	The case $-\sum_{k=i+1}^d t_{\sigma(k)} \equiv 2 \mod 3$ is analogous to the previous one.
\end{proof}

As soon as the dimension of $R$ is bigger than $2$ and the order of $G$ is greater than $3$, it is possible to find cyclic quotient singularities $R^G$ that are not nearly Gorenstein. In order to exhibit  some examples we state a necessary condition which follows immediately from Theorem \ref{cyclic nearly Gorenstein}.

\begin{rem}\label{rem:previousremark}
Let $\frac{1}{n}(t_1,\dots,t_d)$ be nearly Gorenstein.
If $\gcd(t_{\sigma(1)}, \dots, t_{\sigma(i)},n)=m>1$ for some $i>0$ and some permutation $\sigma$ of $\{1,\dots,d\}$, then $t_{\sigma(i+1)}+\dots+t_{\sigma(d)} \equiv 0 \mod m$. 
Indeed, if we choose $a_1=n$, Theorem \ref{cyclic nearly Gorenstein} implies that there exists $b_1$ such that $b_1 t_{\sigma(1)} \equiv - \sum_{k=2}^d t_{\sigma(k)} \mod n$. 
Therefore, it is enough to consider this congruence modulo $m$.
\end{rem}

\begin{ex}\label{ex:cyclicnotNG}
(1) Let $\gcd(n,t_1)=m >2$, with $\gcd(m+1,n)=1$ and let $t_1=t_2=\dots = t_{d-2}$ and $t_{d-1}=t_{d}=m+1$. Therefore, $\gcd(t_{1}, \dots, t_{d-2},n)=m>1$, but $t_{d-1}+t_{d} \equiv 2 \not\equiv 0 \mod m$. Hence, $R^G$ is not nearly Gorenstein by the previous remark. For instance, $\frac{1}{8}(4,5,5)= \frac{1}{8}(1,1,4)$ is not nearly Gorenstein. \\
(2) Let $t_1=1, t_2=n-1, t_3=n-2$. We have $t_1+t_2 \equiv 0 \mod n$, but there are no $0 < b_1, b_2 \leq 2$ such that $b_1-b_2 \equiv 2 \mod n$. Hence, Theorem \ref{cyclic nearly Gorenstein} implies that $\frac{1}{n}(t_1,t_2,t_3)$ is not nearly Gorenstein. In particular $\frac{1}{4}(1,2,3)$ is not nearly Gorenstein.
We also notice that in this case, if $n>3$ is odd, we have $\gcd(t_{\sigma(1)}, \dots, t_{\sigma(i)},n)=1$ for every $i$ and, therefore, the converse of Remark~\ref{rem:previousremark} does not hold. 
\end{ex}

Another interesting class of nearly Gorenstein quotient singularities is given by Veronese subalgebras, which are obtained when $t_1=t_2= \dots = t_d=1$. See \cite[Corollary 4.8]{HHS19} for a proof in the positively graded case. 

\begin{cor}\label{cor:VeroneseareNG}
	The Veronese subalgebras of $R$ are nearly Gorenstein.
\end{cor}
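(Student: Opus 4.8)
The plan is to apply the numerical criterion of Theorem~\ref{cyclic nearly Gorenstein} directly to the Veronese case $t_1=\cdots=t_d=1$. Since all the exponents equal $1$, the permutation $\sigma$ appearing in the statement plays no role, so I may fix $\sigma=\mathrm{id}$. For a given $0<i<d$ and an $i$-tuple $(a_1,\dots,a_i)$ of positive integers, the two hypotheses of the criterion become $a_1+\cdots+a_i\le n$ and $a_1+\cdots+a_i\equiv 0\bmod n$. Because each $a_j\ge 1$, the sum is a positive multiple of $n$ that does not exceed $n$, hence $a_1+\cdots+a_i=n$ exactly. This is the key simplification that makes the Veronese case completely uniform.

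Next I would produce the integers $b_1,\dots,b_i$ requested by the criterion. Here the target congruence reads $\sum_{j=1}^i b_j\equiv -\sum_{k=i+1}^d 1=-(d-i)\bmod n$, subject to $0<b_j\le a_j+1$. As the $b_j$ range independently over $1\le b_j\le a_j+1$, their sum takes every integer value between the minimum $i$ (all $b_j=1$) and the maximum $\sum_{j=1}^i(a_j+1)=n+i$. This is an interval of $n+1$ consecutive integers, so it contains a complete residue system modulo $n$; in particular some choice of the $b_j$ realizes the residue $-(d-i)$. Thus the condition of Theorem~\ref{cyclic nearly Gorenstein} holds for every admissible $i$-tuple, and $R^G$ is nearly Gorenstein.

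I do not anticipate any genuine obstacle: the argument is a one-shot application of the criterion, and the only point requiring a moment's care is verifying that the attainable sums $\sum_{j} b_j$ sweep out a full residue system, which is guaranteed because the equality $a_1+\cdots+a_i=n$ forces the range of these sums to consist of $n+1$ consecutive integers. This is precisely the subcase already treated in the proof of Proposition~\ref{prop:cyclicNGfamily}, so the Veronese statement may be regarded as an immediate specialization of that analysis.
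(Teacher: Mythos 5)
Your proposal is correct and takes essentially the same route as the paper: both reduce to the criterion of Theorem~\ref{cyclic nearly Gorenstein}, observe that the hypotheses force $a_1+\cdots+a_i=n$, and then exhibit suitable $b_j$'s. The paper's own proof does this last step by an explicit case analysis on $d-i-kn$, whereas your observation that the attainable sums $\sum_j b_j$ form $n+1$ consecutive integers and hence cover all residues modulo $n$ is cleaner and is exactly the argument already used for the case $t_{\sigma(1)}=\cdots=t_{\sigma(i)}=1$ in the proof of Proposition~\ref{prop:cyclicNGfamily}.
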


\begin{proof}
Let $0<i<d$ and let $a_1, \dots, a_i$ be positive integers such that $a_1+\dots+a_i=n$. Let $kn < d \leq (k+1)n$ for some non-negative integer $k$.
	
	Assume first that $d-i-kn \geq 0$. Then, we have $0 \leq d-i-kn \leq n-i$ that implies $i \leq n-(d-i-kn) \leq n = \sum_{j=1}^i a_j$. Therefore, there exist $b_1, \dots, b_i$ such that $1 \leq b_j \leq a_j$ and $\sum_{j=1}^i b_j t_{\sigma(j)}\equiv n-(d-i-kn) \equiv -\sum_{k=i+1}^d t_{\sigma(k)} \mod n$.
	
	Assume now that $d-i-kn<0$. It follows that $0<kn-d+i<i$ and then \[\sum_{j=1}^{kn-d+i} (a_j+1)t_{\sigma_j}+\sum_{l=kn-d+i+1}^i a_lt_{\sigma_l} \equiv n+kn-d+i \equiv -(d-i) \equiv - \sum_{k=i+1}^d t_{\sigma(k)} \mod n.\]
	Hence, the claim follows by Theorem \ref{cyclic nearly Gorenstein}.
\end{proof}

\par In Table~\ref{Table NG} we present an exhaustive list of  non-isomorphic cyclic quotient singularities for $d=3$ and $4\leq n\leq7$, and for $d=4$ and $4\leq n\leq 6$. Moreover, by using the numerical criterion of Theorem~\ref{cyclic nearly Gorenstein}, we report if they are Gorenstein, nearly Gorenstein or not.

\begin{center}
	\begin{table}[h]
		\setlength\extrarowheight{1mm}
		\begin{tabular}{c|c||c|c||c|c}
			Ring & \ Is nearly Gor. \ & Ring & \ Is nearly Gor. \ & Ring & \ Is nearly Gor. \\
			\hline
			$\frac{1}{4}(1, 1, 1 )$  & NG & 
			$\frac{1}{4}( 1, 1, 2 )$  & G & 
			$\frac{1}{4}( 1, 1, 3 )$  & NG \\
			$\frac{1}{4}( 1, 2, 3 )$  & not NG &
			$\frac{1}{5}( 1, 1, 1 )$ & NG &
			$\frac{1}{5}( 1, 1, 2 )$ & NG \\
			$\frac{1}{5}( 1, 1, 3 )$ & G & 
			$\frac{1}{5}( 1, 1, 4 )$ & NG & 
			$\frac{1}{5}( 1, 2, 3 )$ & not NG \\ 
			$\frac{1}{6}( 1, 1, 1 )$ & NG &
			$\frac{1}{6}( 1, 1, 2 )$ & NG & 
			$\frac{1}{6}( 1, 1, 3 )$ & not NG  \\
			$\frac{1}{6}( 1, 1, 4 )$ & G & 
			$\frac{1}{6}( 1, 1, 5 )$ & NG & 
			$\frac{1}{6}( 1, 2, 3 )$ & G \\
			$\frac{1}{6}( 1, 2, 5 )$ & not NG & 
			$\frac{1}{6}( 1, 3, 4 )$ & not NG & 
			$\frac{1}{6}( 1, 3, 5 )$ & not NG \\
			$\frac{1}{7}( 1, 1, 1 )$ & NG &
			$\frac{1}{7}( 1, 1, 2 )$ & NG &
			$\frac{1}{7}( 1, 1, 3 )$ & not NG \\ 
			$\frac{1}{7}( 1, 1, 4 )$ & NG &
			$\frac{1}{7}( 1, 1, 5 )$ & G & 
			$\frac{1}{7}( 1, 1, 6 )$ & NG \\
			$\frac{1}{7}( 1, 2, 3 )$ & not NG & 
			$\frac{1}{7}( 1, 2, 4 )$ & G & 
			$\frac{1}{7}( 1, 2, 5 )$ & not NG \\
			$\frac{1}{7}( 1, 2, 6 )$ & not NG & 
			$\frac{1}{4}( 1, 1, 1, 1 )$ & G &
			$\frac{1}{4}( 1, 1, 1, 2 )$ & not NG \\
			$\frac{1}{4}( 1, 1, 1, 3 )$ & not NG &
			$\frac{1}{4}( 1, 1, 2, 2 )$ & NG &
			$\frac{1}{4}( 1, 1, 2, 3 )$ & not NG \\
			$\frac{1}{4}( 1, 1, 3, 3 )$ & G  &
			$\frac{1}{4}( 1, 2, 2, 3 )$ & G &
			$\frac{1}{5}( 1, 1, 1, 1 )$ & NG \\
			$\frac{1}{5}( 1, 1, 1, 2 )$ & G &
			$\frac{1}{5}( 1, 1, 1, 3 )$ & NG &
			$\frac{1}{5}( 1, 1, 1, 4 )$ & not NG \\
			$\frac{1}{5}( 1, 1, 2, 2 )$ & NG &
			$\frac{1}{5}( 1, 1, 2, 3 )$ & NG &
			$\frac{1}{5}( 1, 1, 2, 4 )$ & not NG \\
			$\frac{1}{5}( 1, 1, 3, 4 )$ & NG &
			$\frac{1}{5}( 1, 1, 4, 4 )$ & G &
			$\frac{1}{5}( 1, 2, 3, 4 )$ & G \\
			$\frac{1}{6}( 1, 1, 1, 1 )$ & NG &
			$\frac{1}{6}( 1, 1, 1, 2 )$ & not NG &
			$\frac{1}{6}( 1, 1, 1, 3 )$ & G \\
			$\frac{1}{6}( 1, 1, 1, 4 )$ & not NG &
			$\frac{1}{6}( 1, 1, 1, 5 )$ & not NG &
			$\frac{1}{6}( 1, 1, 2, 2 )$ & G \\
			$\frac{1}{6}( 1, 1, 2, 3 )$ & not NG &
			$\frac{1}{6}( 1, 1, 2, 4 )$ & NG &
			$\frac{1}{6}( 1, 1, 2, 5 )$ & not NG \\
			$\frac{1}{6}( 1, 1, 3, 3 )$ & not NG &
			$\frac{1}{6}( 1, 1, 3, 4 )$ & not NG &
			$\frac{1}{6}( 1, 1, 3, 5 )$ & not NG \\
			$\frac{1}{6}( 1, 1, 4, 4 )$ & NG &
			$\frac{1}{6}( 1, 1, 4, 5 )$ & not NG &
			$\frac{1}{6}( 1, 1, 5, 5 )$ & G \\
			$\frac{1}{6}( 1, 2, 2, 3 )$ & not NG &
			$\frac{1}{6}( 1, 2, 2, 5 )$ & not NG &
			$\frac{1}{6}( 1, 2, 3, 3 )$ & not NG \\
			$\frac{1}{6}( 1, 2, 3, 4 )$ & not NG &
			$\frac{1}{6}( 1, 2, 3, 5 )$ & not NG &
			$\frac{1}{6}( 1, 2, 4, 5 )$ & G \\
			$\frac{1}{6}( 1, 3, 3, 4 )$ & not NG &
			$\frac{1}{6}( 1, 3, 3, 5 )$ & G &
			$\frac{1}{6}( 1, 3, 4, 4 )$ & G \\
		\end{tabular} 
		\vspace{6pt}
		\caption{Cyclic quotient singularities for $d=3$ and $4\leq n\leq7$, and for $d=4$ and $4\leq n\leq 6$. G means Gorenstein and NG means nearly Gorenstein.}
		\label{Table NG}
	\end{table}
\end{center}

\par We conclude this section by recalling that the residue of a local ring is a measure of how far is a ring from being nearly Gorenstein. In the next theorem we show that we have cyclic quotient singularities of arbitrarily large residue already in dimension $3$.

\begin{thm}\label{theorem:residuelarge}
Let $R=\kk\llbracket x,y,z \rrbracket$ and let $n$ and $m$ be two coprime positive integers with $n\geq3$ and $m<\lceil\frac{n}{2} \rceil$. Consider the group $G$ generated by $\mathrm{diag}(\lambda, \lambda^{m}, \lambda^{n-1})$, where $\lambda$ is a primitive $n$-th root of unit in $\kk$. Then, $\mathrm{res}(R^G)=m$.
In particular, $R^G$ is nearly Gorenstein if and only if $m=1$.
\end{thm}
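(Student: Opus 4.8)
The plan is to compute $\tr(\omega_{R^G})$ explicitly as a monomial ideal and then count a $\kk$-basis of the quotient $R^G/\tr(\omega_{R^G})$. Writing $t_1=1$, $t_2=m$ and $t_3=n-1\equiv-1\pmod n$, the ring $R^G=\frac1n(1,m,n-1)$ has a topological $\kk$-basis consisting of the monomials $x^ay^bz^c$ with $a+mb-c\equiv0\pmod n$, and by Proposition~\ref{prop:cyclictrace} the ideal $\tr(\omega_{R^G})$ is again a monomial ideal. Hence the images of the monomials of $R^G$ lying outside $\tr(\omega_{R^G})$ form a $\kk$-basis of the quotient, and I would reduce the problem to showing that exactly $m$ such monomials exist, so that $\res(R^G)=\ell_{R^G}(R^G/\tr(\omega_{R^G}))=m$. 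Each of these monomials is then tested against Lemma~\ref{lemma-tracecriterion}.

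By condition~(1) of Lemma~\ref{lemma-tracecriterion} every monomial with all three exponents positive already lies in the trace, so only monomials supported on a proper subset of $\{x,y,z\}$ can fail. The easy supports I would dispose of first. The single-variable monomials of $R^G$ are forced to be powers $x^a,y^b,z^c$ with $n\mid a$, $n\mid b$, $n\mid c$ (using $\gcd(m,n)=1$ for $y$), so the bounds $a+1,b+1,c+1\geq n+1$ let the auxiliary integer in condition~(2) sweep out a complete residue system, making the required congruence solvable. For the support $\{x,y\}$ (missing variable $z$, so the target is $-t_3\equiv1$) the choice $(b_1,b_2)=(a+1,b)$ gives $b_1+mb_2=1+(a+mb)\equiv1\pmod n$; for $\{y,z\}$ (missing $x$, target $-t_1\equiv-1$) the choice $(b_2,b_3)=(b,c+1)$ gives $mb_2-b_3=(mb-c)-1\equiv-1\pmod n$. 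Both are legal since they use the membership relation of $h$ in $R^G$, so these supports contribute nothing.

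The heart of the argument is the support $\{x,z\}$. Here $h=x^az^c\in R^G$ forces $a\equiv c\pmod n$, and condition~(2) (missing $y$, target $-t_2\equiv-m$) asks for $b_1\in[1,a+1]$ and $b_3\in[1,c+1]$ with $b_1-b_3\equiv-m\pmod n$. If $a\geq n-1$ or $c\geq n-1$ then one of $b_1,b_3$ runs through all residues and the congruence is solvable, so a failure forces $a,c\leq n-2$, which together with $a\equiv c$ gives $a=c$. For $h=x^az^a$ the attainable values of $b_1-b_3$ form the symmetric interval $\{-a,\dots,a\}\bmod n$, and the hypothesis $m<\lceil n/2\rceil$, which gives $m<n-m$, is exactly what lets me conclude that $-m$ lies in this interval precisely when $a\geq m$. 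Thus condition~(2) fails exactly for $1\leq a\leq m-1$, producing the $m-1$ monomials $xz,x^2z^2,\dots,x^{m-1}z^{m-1}$ outside the trace. Adding the constant monomial $1$, which never lies in the proper ideal $\tr(\omega_{R^G})$, gives a total of $1+(m-1)=m$, whence $\res(R^G)=m$; in particular $R^G$ is nearly Gorenstein, i.e.\ $\res\leq1$, exactly when $m=1$.

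The main obstacle is precisely the support $\{x,z\}$: determining which diagonal monomials $x^az^a$ escape the trace is the only place where the bound $m<\lceil n/2\rceil$ is used in an essential way, since without it the symmetric-interval count $\{-a,\dots,a\}$ would capture $-m$ first at $a\geq\min(m,n-m)$, replacing $m-1$ by $\min(m,n-m)-1$ and destroying the clean formula. Everything else reduces to exhibiting the correct auxiliary exponents, which the explicit choices above supply.
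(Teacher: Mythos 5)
Your proposal is correct and follows essentially the same route as the paper's proof: reduce to a monomial count via Proposition~\ref{prop:cyclictrace} and Lemma~\ref{lemma-tracecriterion}, dispose of all supports except $\{x,z\}$ by the same explicit choices of the $b_j$, and isolate the $m-1$ diagonal monomials $x^az^a$, $1\leq a\leq m-1$, as the only generators of $\mathfrak m$ outside the trace. Your explicit justification of where the hypothesis $m<\lceil n/2\rceil$ enters (namely $\min(m,n-m)=m$) is a slightly more detailed account of the step the paper states tersely, but it is the same argument.
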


\begin{proof}
In order to compute $\mathrm{res}(R^G)=\ell_{R^G} (R^G/\tr(\omega_{R^G}))$ we count how many monomials of the maximal ideal $\mathfrak{m}$ of $R^G$ are not in $\tr(\omega_{R^G})$.
We fix $f=xyz$ and we recall that 
$\tr(\omega_{R^G})=(fR)^G \ (R:_{Q(R)}fR)^G$ by Proposition~\ref{prop:cyclictrace}.
Let $g=x^ay^bz^c\in\mathfrak{m}$.
If $a,b,c>0$, then we can write $g=f x^{a-1}y^{b-1}z^{c-1}\in (fR)^G\subseteq \tr(\omega_{R^G})$.
If $b=c=0$, then $g=x^a\in\mathfrak{m}$ implies that $n$ divides $a$.
Then, the condition of Lemma \ref{lemma-tracecriterion} is satisfied for $g$, since $b_1$ gives all possible residues modulo $n$ for $0<b_1\leq a+1$.
Therefore, $x^a\in\tr(\omega_{R^G})$.
Similarly, one obtains that $y^b, z^c\in\tr(\omega_{R^G})$ because $\mathrm{gcd}(m,n)=\mathrm{gcd}(n-1,n)=1$.
\par It remains to check the monomials of the form $x^ay^b, x^az^c, y^bz^c\in\mathfrak{m}$ with $a,b,c>0$. 
We use again the criterion of Lemma~\ref{lemma-tracecriterion}.
If $y^bz^c\in\mathfrak{m}$, then $mb+(n-1)c\equiv0\mod n$.
It follows that $mb+(n-1)(c+1)\equiv n-1\equiv -1=-t_1\mod n$, therefore $y^bz^c\in\tr(\omega_{R^G})$.
If $x^ay^b\in\mathfrak{m}$, then $a+mb\equiv0\mod n$ implies $(a+1)+mb\equiv 1\equiv -(n-1)= -t_3\mod n$, thus $x^ay^b\in\tr(\omega_{R^G})$.
\par Finally, consider a monomial $x^az^c\in\mathfrak{m}$. 
If $a \geq n$ or $c \geq n$, then $x^ay^b \in \tr(\omega_{R^G})$ because $x^n$ and $y^n$ are in $\tr(\omega_{R^G})$, therefore we may assume $a,c < n$.
Since $x^az^c\in\mathfrak{m}$, we have $a+(n-1)c\equiv 0\mod n$, thus $a\equiv c\mod n$ which implies $a=c$.
By Lemma~\ref{lemma-tracecriterion}, $x^az^c\in \tr(\omega_{R^G})$ if and only if there exist $0<b_1,b_3\leq a+1$ such that $b_1-b_3\equiv -m\mod n$.
We notice that $b_1-b_3\in\{a,a-1,\dots,-a+1,-a\}$ and, so, there are exactly $m-1$ monomials in $\mathfrak{m}$ of the form $x^az^a$ that do not satisfy this criterion: $x^a z^a$ with $1 \leq a \leq m-1$. Hence, $\dim_{R^G/\mathfrak{m}} (R^G/\tr(\omega_{R^G}))=\dim_{R^G/\mathfrak{m}} (\mathfrak{m}/\tr(\omega_{R^G}))+1=m$. 
\end{proof}

\section{An addendum to Ding's classification in dimension two}\label{section:dimension2}
Let $\mathbb{C}$ be the field of complex numbers and set $R=\mathbb{C}\llbracket u,v\rrbracket$.
In his paper \cite{Din93}, Ding classified nearly Gorenstein quotient singularities $R^G$, where $G$ is a finite small subgroup of $\GL(2,\mathbb{C})$. 
His result relies on the well-known classification of such subgroups which goes back to Klein \cite{Kle84} (see also \cite{Bea10, Bri67, Rie77}).
However, a nearly Gorenstein quotient singularity was left out of his classification.
We are going to describe it.

\par We consider the octahedral group $\mathbb{O}_{11}$ generated by matrices
\begin{align*}
\phi&=\begin{pmatrix}
\lambda & 0 \\ 0 & \lambda
\end{pmatrix}, \ 
\psi=\begin{pmatrix}
\zeta & 0 \\ 0 & \zeta^{-1}
\end{pmatrix}, \
\tau=\begin{pmatrix}
0 & i \\ i & 0
\end{pmatrix},\
\eta=\frac{1}{\sqrt2}\begin{pmatrix}
\zeta & \zeta^3 \\ \zeta & \zeta^7
\end{pmatrix},
\end{align*}
where $\lambda$ and $\zeta$ are primitive roots of unity in $\mathbb{C}$ of orders $22$ and $8$ respectively.
In other words, $\mathbb{O}_{11}$ is the extension of the binary octahedral subgroup of $\SL(2,\mathbb{C})$ of order $48$ generated by $\psi, \tau, \eta$ with the cyclic group of order $22$ generated by $\phi$.  

\begin{prop}\label{prop:O11isNG}
	The quotient singularity $R^{\mathbb{O}_{11}}$ is nearly Gorenstein.
\end{prop}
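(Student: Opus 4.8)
The plan is to compute $\tr(\omega_{R^{\mathbb{O}_{11}}})$ explicitly and show it equals the maximal ideal. Write $G=\mathbb{O}_{11}$ and let $H=\langle\psi,\tau,\eta\rangle\subseteq\SL(2,\mathbb{C})$ be the binary octahedral group of order $48$. Since $\phi=\mathrm{diag}(\lambda,\lambda)$ is a scalar, it is central, so $G=\langle\phi\rangle\cdot H$ with $\langle\phi\rangle\cap H=\{\pm I\}$ (because $\phi^{11}=-I\in H$) and $|G|=528$. As $H\subseteq\SL(2,\mathbb{C})$ we have $\det|_H\equiv1$, while $\det(\phi)=\lambda^2$ has order $11$. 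First I would record the classical description of $S:=R^H$: it is the $E_7$ du Val singularity $S=\mathbb{C}\llbracket F_8,F_{12},F_{18}\rrbracket/(F_{18}^2+F_{12}^3+F_{12}F_8^3)$, where $F_i$ is a genuine $H$-invariant form of degree $i$. Every $H$-invariant has even degree, and $\phi$ scales a form of degree $e$ by $\lambda^e$, so $\phi$ induces on $S$ an action of $C_{11}=\langle\phi\rangle/\langle\phi^{11}\rangle$ by $\mu^{e/2}$ with $\mu=\lambda^2$ a primitive $11$th root of unity; this yields a $\mathbb{Z}/11$-grading $S=\bigoplus_{j}S_j$ in which $F_8,F_{12},F_{18}$ have weights $4,6,9$. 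Under this grading $R^G=S_0$, the maximal ideal is $\mathfrak{m}=(S_0)_{>0}$, and the modules of $\det^{\pm1}$-semi-invariants are $S_1$ and $S_{10}$ (since $\det(\phi)=\mu$).

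Next I would identify the trace. The element $f=F_8^3$ (degree $24$, weight $1$) is a $G$-canonical element: it is $H$-invariant and $\phi(f)=\lambda^{24}f=\lambda^2 f=\det(\phi)f$. By Theorem~\ref{thm:singh-watanabe}, $\omega_{R^G}=fR\cap R^G=f\,S_{10}$, and by Lemma~\ref{lemma:tracecanonicalideal}, $\tr(\omega_{R^G})=\omega_{R^G}\,(R^G:_{Q(R^G)}\omega_{R^G})$. Arguing as in Proposition~\ref{prop:cyclictrace} (using that $R$ is a UFD and that the semi-invariant modules are reflexive of rank one) one obtains $R^G:_{Q(R^G)}(fS_{10})=f^{-1}S_1$, whence
\[
\tr(\omega_{R^G})=S_1\cdot S_{10},
\]
the ideal of $S_0$ generated by all products of a weight-$1$ and a weight-$10$ element of $S$. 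The inclusion $S_1S_{10}\subseteq\mathfrak{m}$ is clear, so the whole point is to prove $\mathfrak{m}\subseteq S_1S_{10}$.

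For the last inclusion I would compute a minimal generating set of the ideal $\mathfrak{m}=(S_0)_{>0}$. A Hilbert-basis computation for the weight-$0$ monomials $F_8^aF_{12}^bF_{18}^\varepsilon$ (with $\varepsilon\in\{0,1\}$, reducing $F_{18}^2$ via the $E_7$ relation, which for instance makes $F_{12}^{11}=-(F_{12}^4F_{18})^2-(F_8F_{12}^3)^3$ redundant) shows that $\mathfrak{m}$ is minimally generated by the six monomials $F_8^4F_{12},\,F_8F_{12}^3$ (degree $44$), $\ F_8^6F_{18},\,F_8^3F_{12}^2F_{18},\,F_{12}^4F_{18}$ (degree $66$) and $F_8^{11}$ (degree $88$). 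Each factors as a weight-$1$ element times a weight-$10$ element: for instance $F_8^4F_{12}=F_8^3\cdot(F_8F_{12})$, $\ F_8F_{12}^3=F_{12}^2\cdot(F_8F_{12})$, $\ F_8^6F_{18}=F_8^3\cdot(F_8^3F_{18})$, $\ F_8^3F_{12}^2F_{18}=F_{12}^2\cdot(F_8^3F_{18})$, $\ F_{12}^4F_{18}=F_{12}^2\cdot(F_{12}^2F_{18})$ and $F_8^{11}=F_8^3\cdot F_8^8$, where in each case the first factor lies in $S_1$ and the second in $S_{10}$. Hence every generator of $\mathfrak{m}$ lies in $S_1S_{10}=\tr(\omega_{R^G})$, and so $R^G$ is nearly Gorenstein.

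The main obstacle is twofold. First, upgrading the cyclic colon-ideal identity of Proposition~\ref{prop:cyclictrace} to the non-abelian group $G$: one must justify $R^G:_{Q(R^G)}(fS_{10})=f^{-1}S_1$, i.e.\ that the reflexive rank-one semi-invariant modules $S_1,S_{10}$ behave like the monomial modules of the cyclic case, which rests on $R$ being regular and on $S_1,S_{10}$ being divisorial (note that $\tr(\omega_{R^G})$ itself is only $\mathfrak{m}$-primary, not divisorial, so the class-group heuristic alone cannot pin it down and the explicit product ideal really must be computed). Second, and more laborious, is the combinatorial description of $S=R^H$ with its $\mathbb{Z}/11$-grading: one has to make the $E_7$ relation explicit enough to reduce every weight-$0$ monomial to the basis $\{F_8^aF_{12}^bF_{18}^\varepsilon\}$, determine the complete (finite) Hilbert basis of $\mathfrak{m}$, and confirm that no further minimal generator appears in higher degree. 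Once this bookkeeping is in place, the six factorizations above finish the proof.
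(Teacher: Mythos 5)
Your argument is correct and, once the notation is translated, it is computationally identical to the paper's proof: your $F_8,F_{12},F_{18}$ are the paper's $g_3,g_1,g_2$, your canonical element $F_8^3$ is the paper's $f=g_3^3$, your six monomials are exactly the six generators $z_1,\dots,z_6$ that the paper takes from Riemenschneider, and your factorizations through $S_1\cdot S_{10}$ reproduce the paper's observation that $z_1,z_2,z_4,z_6\in fR$ while $z_3=z_6\cdot\frac{g_1^2}{g_3^3}$ and $z_5=z_2\cdot\frac{g_1^2}{g_3^3}$ (your $F_{12}^2/F_8^3$ is their $g_1^2/g_3^3$). The genuine difference is one of economy, and it works against you: you aim for the exact equality $\tr(\omega_{R^G})=S_1S_{10}$, whose hard half $R^G:_{Q(R^G)}(fS_{10})\subseteq f^{-1}S_1$ is precisely what you single out as ``the main obstacle.'' But nearly Gorensteinness only requires a lower bound on the trace, so only the trivial inclusion $f^{-1}S_1\subseteq R^G:_{Q(R^G)}(fS_{10})$ is needed: for $s\in S_1$ the fraction $s/f$ is $G$-invariant and multiplies $fS_{10}$ into $S_{11}=S_0=R^G$, whence $S_1S_{10}\subseteq\tr(\omega_{R^G})$ and your six factorizations finish the proof. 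This is exactly how the paper sidesteps the colon-ideal computation, producing only the two needed elements $1$ and $g_1^2/g_3^3$ of $R^G:\omega_{R^G}$ rather than the whole module. Your added value is the transparent $\mathbb{Z}/11$-grading of the $E_7$ ring, which lets you derive the six generators and the factorizations systematically instead of quoting Riemenschneider; your remaining debts are the standard facts that $R^H$ is generated by forms of degrees $8,12,18$ with the single $E_7$ relation and that the weight-$0$ monomials listed really generate $\mathfrak{m}$ (your Hilbert-basis check, which I verified, is sound). If you drop the unproved equality and keep only the inclusion, the proof is complete.
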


\begin{proof}
We consider the polynomials
\[\begin{split}
 g_1&=(u^5v-uv^5)^2, \\
 g_2&=uv(u^4-v^4)(u^{12}-33u^8v^4-33u^4v^8+v^{12}),\\
 g_3&=u^8+14u^4v^4+v^8,
\end{split}
\]
which are generating invariants for the Gorenstein singularity $E_7$ (see e.g. \cite[\S6.16]{LW12}) and in particular invariants for the action of $\psi$, $\tau$, and $\eta$.
By \cite[Satz~6]{Rie77}, a minimal set of generators for the maximal ideal of $R^{\mathbb{O}_{11}}$ is given by
\[
z_1=g_3^{11}, \ z_2=g_1g_3^4, \ z_3=g_1^4g_2, \ z_4=g_2g_3^6, \ z_5=g_1^3g_3, \ z_6=g_1^2g_2g_3^3.
\]
Now, we consider the polynomial $f=g_3^3$. 
It is invariant for the action of the matrices $\psi, \tau, \eta\in\SL(2,\mathbb{C})$, and $\phi(f)=\lambda^{24}f=\det(\phi)f$. 
Therefore, $f$ is a $\mathbb{O}_{11}$-canonical element and, thus, $\omega_{R^{\mathbb{O}_{11}}}=(fR)^{\mathbb{O}_{11}}$ is a canonical ideal of $R^{\mathbb{O}_{11}}$ by Theorem~\ref{thm:singh-watanabe}.
\par We have that $z_1, z_2, z_4$ and $z_6$ are multiples of $f$, then they are in $\tr(\omega_{R^{\mathbb{O}_{11}}})$.
Moreover, the fact 
\[
\frac{z_3}{z_6}=\frac{z_5}{z_2}=\frac{g_1^2}{g_3^3}\in (R^{\mathbb{O}_{11}}:_{Q(R^{\mathbb{O}_{11}})}(fR\cap R^{\mathbb{O}_{11}}))
\]
implies that $z_3=z_6\frac{z_3}{z_6}, \ z_5=z_2\frac{z_5}{z_2}\in\tr(\omega_{R^{\mathbb{O}_{11}}})$ as well.
Hence, $R^{\mathbb{O}_{11}}$ is nearly Gorenstein.
\end{proof}

We also point out that Ding erroneously included cyclic quotient singularities $\frac{1}{n}(1,n-1)$ in his list of nearly Gorenstein not Gorenstein quotient singularities. Indeed, it is well known that $\frac{1}{n}(1,n-1)$  is Gorenstein.

\end{document}